\newtheorem{theorem}{Theorem}[section]
\newtheorem{lemma}[theorem]{Lemma}
\newtheorem{proposition}[theorem]{Proposition}
\newtheorem{corollary}[theorem]{Corollary}
\theoremstyle{definition}
\newtheorem{problem}[theorem]{Problem}
\theoremstyle{remark}
\numberwithin{equation}{section}
\begin{document}

\setcounter{page}{1}

\title[On the unit sphere of positive operators]{On the unit sphere of positive operators}

\author[A.M. Peralta]{Antonio M. Peralta$^1$$^{*}$}

\address{$^{1}$Departamento de An{\'a}lisis Matem{\'a}tico, Facultad de
Ciencias, Universidad de Granada, 18071 Granada, Spain.}
\email{\textcolor[rgb]{0.00,0.00,0.84}{aperalta@ugr.es}}



\let\thefootnote\relax\footnote{Copyright 2016 by the Tusi Mathematical Research Group.}

\subjclass[2010]{Primary 47B49, Secondary 46A22, 46B20, 46B04, 46A16, 46E40.}

\keywords{Tingley's problem; extension of isometries; isometries; positive operators; operator norm}

\date{Received: April 6th, 2018; Revised: yyyyyy; Accepted: zzzzzz.
\newline \indent $^{*}$Corresponding author}

\begin{abstract}
Given a C$^*$-algebra $A$, let $S(A^+)$ denote the set of those positive elements in the unit sphere of $A$. Let $H_1$, $H_2,$ $H_3$ and $H_4$ be complex Hilbert spaces, where $H_3$ and $H_4$ are infinite-dimensional and separable. In this note we prove a variant of Tingley's problem by showing that every surjective isometry $\Delta : S(B(H_1)^+)\to S(B(H_2)^+)$ or (respectively, $\Delta : S(K(H_3)^+)\to S(K(H_4)^+)$) admits a unique extension to a surjective complex linear isometry from $B(H_1)$ onto $B(H_2))$ (respectively, from $K(H_3)$ onto $B(H_4)$). This provides a positive answer to a conjecture posed by G. Nagy [\emph{Publ. Math. Debrecen}, 2018].
\end{abstract} \maketitle

\section{Introduction}

During the last thirty years, mathematicians have pursued an argument to prove or discard a positive solution to Tingley's problem (compare the survey \cite{Pe2018}). This problem, in which Geometry and Functional Analysis interplay, is just as attractive as difficult. The concrete statement of the problem reads as follows: Let $S(Y)$ and $S(Y)$ be the unit spheres of two normed spaces $X$ and $Y$, respectively. Suppose $\Delta : S(X) \to S(Y)$ is a surjective isometry. Does $\Delta $ admit an extension to a surjective real linear isometry from $X$ onto $Y$? \smallskip

A wide list of references, obtained during the last thirty years, encompasses positive solutions to Tingley's problem in the cases of sequence spaces \cite{Ding2002,Di:p,Di:8,Di:1}, spaces of measurable functions on a $\sigma$-finite measure space \cite{Ta:8, Ta:1, Ta:p}, spaces of continuous functions \cite{Wang}, finite-dimensional polyhedral spaces \cite{KadMar2012}, finite-dimensional C$^*$-algebras \cite{Tan2016,Tan2017,Tan2017b}, $K(H)$ spaces \cite{PeTan16}, spaces of trace class operators \cite{FerGarPeVill17}, and $B(H)$ spaces \cite{FerPe17,FerPe17b,FerPe17c}. The most recent achievements in this line establish that a surjective isometry between the unit spheres of two arbitrary von Neumann algebras admits a unique extension to a surjective real linear isometry between the corresponding von Neumann algebras \cite{FerPe17d}, and an excellent contribution due to M. Mori contains a complete positive solution to Tingley's problem for surjective isometries between the unit spheres of von Neumann algebra preduals \cite{Mori2017}. Readers interested in learning more details can consult the recent survey \cite{Pe2018}.\smallskip

The particular setting of C$^*$-algebras, and specially the von Neumann algebra $B(H),$ of all bounded linear operators on a complex Hilbert space $H$, and its hermitian subalgebras and subspaces, offer the optimal conditions to consider an interesting variant to Tingley's problem. Let us introduce some notation first. If $B$ is a subset of a Banach space $X$, we shall write $S(B)$ for the intersection of $B$ and $S(X)$. Given a C$^*$-algebra $A$, the symbol $A^+$ will denote the cone of positive elements in $A$, while $S(A^+)$ will stand for the sphere of positive norm-one operators.

\begin{problem}\label{problem positive Tingley's problem} Let $\Delta : S(A^+)\to S(B^+)$ be a surjective isometry, where $A$ and $B$ are C$^*$-algebras. Does $\Delta$ admit an extension to a surjective complex linear isometry $T : A\to B$?
\end{problem}

The hypothesis in Problem \ref{problem positive Tingley's problem} are certainly weaker than the hypothesis in Tingley's problem. However, the required conclusion is also weaker, because the goal is to find a surjective linear isometry $T:A\to B$ satisfying $T|_{S(A^+)} \equiv \Delta,$ and we not care about the behavior of $T$ on the rest of $S(A)$. For the moment being, both problems seem to be independent.\smallskip

Problem \ref{problem positive Tingley's problem} can be also considered when $A$ and $B$ are replaced with the space $(C_p(H), \|\cdot\|_p)$ of all $p$-Schatten-von Neumann operators ($1\leq p\leq \infty$). For a finite-dimensional complex Hilbert space $H$ and $p\geq 1$,  L. Moln{\'a}r and G. Nagy determined all surjective isometries on the space $(S(C_1(H)^+), \|.\|_p)$ (see \cite[Theorem 1]{MolNag2012}). Problem \ref{problem positive Tingley's problem} has been solved by L. Moln{\'a}r and W. Timmermann for the space $C_1(H)$ of trace class operators on an arbitrary complex Hilbert space $H$  (see \cite[Theorem 4]{MolTim2003}). Given $p$ in the interval $(1,\infty)$ and $A=B= C_p(H)$, a complete solution to Problem \ref{problem positive Tingley's problem} has been obtained by G. Nagy in \cite[Theorem 1]{Nagy2013}.\smallskip

Following the usual notation, for each complex Hilbert space $H$, we identify $C_\infty (H)$ with the space $B(H)$. In a very recent contribution, G. Nagy resumes the study of Problem \ref{problem positive Tingley's problem} for $B(H)$. Applying deep geometric arguments in spectral theory and projective geometry, Nagy solves this problem in the case in which $H$ is finite-dimensional. Concretely, if $H$ is a finite-dimensional complex Hilbert space, and $\Delta : S(B(H)^+)\to S(B(H)^+)$ is an isometry, then $\Delta$ is surjective and there exists a surjective complex linear isometry $T : B(H) \to B(H)$ satisfying $T(x) = \Delta(x)$ for all $x\in B(H)$ (see \cite[Theorem]{Nagy2017}). In the third section of \cite{Nagy2017}, Nagy conjectures that an infinite-dimensional version of his result holds true for surjective isometries on $S(B(H)^+)$.\smallskip

In this paper we present a argument to prove Nagy's conjecture. Concretely, in Theorem \ref{t positive Tigley for B(H)} we prove that for any two complex Hilbert spaces $H_1$ and $H_2$, every surjective isometry $\Delta : S(B(H_1)^+)\to S(B(H_2)^+)$ can be extended to a surjective complex linear isometry {\rm(}actually, a $^*$-isomorphism or a $^*$-anti-automorphism{\rm)} $T: B(H_1)\to B(H_2)$.\smallskip

A closer look at the technical arguments in recent papers dealing with Tingley's problem (compare, for example, \cite{Tan2016, Tan2017, Tan2017b, PeTan16, FerPe17b, FerPe17c}, and \cite{FerPe17d}) reveals a common strategy based on a geometric tool asserting that a surjective isometry between the unit spheres of two Banach spaces $X$ and $Y$ preserves maximal convex sets of the corresponding spheres (see \cite[Lemma 5.1$(ii)$]{ChenDong2011}, \cite[Lemma 3.5]{Tan2014}). This is a real obstacle in our setting, because this geometric tool is not applicable for a surjective isometry $\Delta : S(B(H_1)^+)\to S(B(H_2)^+)$ where we can hardly identify a surjective isometry between the unit spheres of two normed spaces. We shall develop independent arguments to prove the Nagy's conjecture. In this note we introduce new arguments built upon a recent abstract characterization of those elements in $S(B(H)^+)$ which are projections in terms of their distances to positive elements in $S(B(H)^+)$ (see \cite{Pe2018b}), and the Bunce-Wright-Mackey-Gleason theorem (see \cite[Theorem A]{BuWri92} or \cite[Theorem A]{BuWri94}).\smallskip

In section \ref{sec:K(H)} we also give a positive solution to Problem \ref{problem positive Tingley's problem} in the case in which $A$ and $B$ are spaces of compact operators on separable complex Hilbert spaces (see Theorem \ref{t Nagy for K(ell2)}). In this final section, the Bunce-Wright-Mackey-Gleason theorem will be replaced with a theorem due to J.F. Aarnes which guarantees the linearity of quasi-states on $K(H)$ (see \cite{Aarnes70})

\section{Basic background and precedents}

In the recent note \cite{Pe2018b} we establish a geometric characterization of those element in the unit sphere of an atomic von Neumann algebra $M$ (or in the unit sphere of the space of compact operators on a separable complex Hilbert space) in terms of the unit sphere of positive operators around an element. Let us recall the basic definitions. Let $E$ and $P$ be subsets of a Banach space $X$. We define the \emph{unit sphere around $E$ in $P$} as the set $$Sph(E;P) :=\left\{ x\in P : \|x-b\|=1 \hbox{ for all } b\in E \right\}.$$ If $x$ is an element in $X$, we write $Sph(x;P)$ for $Sph(\{x\};P)$. If $E$ is a subset of a C$^*$-algebra $A$, we shall write $Sph^+ (E)$ or $Sph_A^+ (E)$ for the set $Sph(E;S(A^+))$. For each element $a$ in $A$, we shall write $Sph^+ (a)$ instead of $Sph^+ (\{a\})$.\smallskip

We recall that a non-zero projection $p$ in a C$^*$-algebra $A$ is called minimal if $p A p = \mathbb{C} p$. A von Neumann algebra $M$ is called atomic if it coincides with the weak$^*$ closure of the linear span of its minimal projections. It is known that for every atomic von Neumann algebra $M$ there exists a family $\{H_i\}_{i}$ of complex Hilbert spaces such that $\displaystyle M = \bigoplus_j^{\ell_{\infty}} B(H_{j})$ (compare \cite[\S 2.2]{S} or  \cite[\S V.1]{Tak}). Every projection $p$ in an atomic von Neumann algebra $M$ is the least upper bound of the set of all minimal projections in $M$ which are smaller than or equal to $p$.\smallskip

Let $a$ be a positive norm-one element in an atomic von Neumann algebra $M$. In \cite[Theorem 2.3]{Pe2018b} we prove that $$ \hbox{$a$ is a projection } \Leftrightarrow Sph^+_{M} \left( Sph^+_{M}(a) \right) =\{a\}.$$ This holds true when $M=B(H)$. Theorem 2.5 in \cite{Pe2018b} assures that the same equivalence remains true for any positive element $a$ in the unit sphere of $K(H_2)$, where $H_2$ is a separable complex Hilbert space. Since, for every $E\subseteq S(A^+)$,  the set $Sph_A^+ (E)$ is completely determined by the metric structure of $S(A^+)$, the next results borrowed from \cite{Pe2018b} are direct consequences of the characterizations just commented. We recall first that, for a C$^*$-algebra $A$, the symbol $\mathcal{P}roj(A)$ will denote the set of all projections in $A$, and $\mathcal{P}roj(A)^*$ will stand for $\mathcal{P}roj(A)\backslash\{0\}$.

\begin{corollary}\label{c first consequence}{\rm\cite[Corollary 2.6]{Pe2018b}} Let $\Delta : S(M^+)\to S(N^+)$ be a surjective isometry, where $M$ and $N$ are atomic von Neumann algebras. Then $\Delta$ maps $\mathcal{P}roj(M)^*$ onto $\mathcal{P}roj(N)^*$, and the restriction $\Delta|_{\mathcal{P}roj(M)^*} : \mathcal{P}roj(M)^*\to \mathcal{P}roj(N)^*$ is a surjective isometry.
\end{corollary}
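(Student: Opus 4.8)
The strategy is to observe that the passage $E\mapsto Sph^+(E)$ depends only on the metric of $S(A^+)$, so that a surjective isometry automatically intertwines it, and then to plug this into the intrinsic metric description of projections recalled above (namely, that $a\in S(M^+)$ is a projection if and only if $Sph^+_M(Sph^+_M(a))=\{a\}$, which is \cite[Theorem 2.3]{Pe2018b}).

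First I would record the intertwining identity
\[
\Delta\big(Sph^+_M(E)\big)=Sph^+_N\big(\Delta(E)\big) \qquad (E\subseteq S(M^+)).
\]
The inclusion ``$\subseteq$'' is immediate: if $\|x-b\|=1$ for every $b\in E$, then $\|\Delta(x)-\Delta(b)\|=1$ for every $b\in E$, and $\Delta(E)$ is exactly the set of these $\Delta(b)$. For ``$\supseteq$'' I would use surjectivity of $\Delta$: any $y\in Sph^+_N(\Delta(E))$ is of the form $y=\Delta(x)$ with $x\in S(M^+)$, and the isometry property pulls the distance-one conditions back to $x$, so that $x\in Sph^+_M(E)$ and $y\in\Delta(Sph^+_M(E))$.

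Iterating this identity, first with $E=\{a\}$ and then with $E=Sph^+_M(a)$, yields
\[
\Delta\big(Sph^+_M(Sph^+_M(a))\big)=Sph^+_N\big(Sph^+_N(\Delta(a))\big)
\]
for every $a\in S(M^+)$. If $a$ is a projection, the inner double-sphere on the left equals $\{a\}$, so the right-hand side is $\Delta(\{a\})=\{\Delta(a)\}$, whence $\Delta(a)$ is a projection by the characterization. Conversely, if $\Delta(a)$ is a projection then the right-hand side is $\{\Delta(a)\}$, so $\Delta\big(Sph^+_M(Sph^+_M(a))\big)$ is a singleton; since $\Delta$, being an isometry, is injective, the set $Sph^+_M(Sph^+_M(a))$ is itself a nonempty singleton, and as its image is $\{\Delta(a)\}$ this singleton must be $\{a\}$, forcing $a$ to be a projection.

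Because the norm-one projections are precisely the nonzero ones, this shows that $\Delta$ sends $\mathcal{P}roj(M)^*$ into $\mathcal{P}roj(N)^*$, and surjectivity together with the converse implication makes this map onto; the restriction $\Delta|_{\mathcal{P}roj(M)^*}$ is thus a bijection onto $\mathcal{P}roj(N)^*$ inheriting the isometric property of $\Delta$. The genuinely hard content sits in the metric characterization of projections, which we are granted; the only delicate point in the present deduction is the converse implication, where one must refrain from assuming a priori that $a\in Sph^+_M(Sph^+_M(a))$ and instead extract the singleton structure purely from injectivity of $\Delta$.
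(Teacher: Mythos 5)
Your argument is correct and is precisely the reasoning the paper intends: it records that $Sph^+$ is determined by the metric of $S(M^+)$, hence intertwined by $\Delta$, and then applies the characterization $a\in\mathcal{P}roj(M)^*\Leftrightarrow Sph^+_M(Sph^+_M(a))=\{a\}$ from \cite[Theorem 2.3]{Pe2018b} in both directions (the paper states the corollary as a ``direct consequence'' of exactly this observation, citing \cite[Corollary 2.6]{Pe2018b} for the details). Your care about not assuming $a\in Sph^+_M(Sph^+_M(a))$ in the converse direction, extracting the singleton from injectivity instead, is the right way to close that step.
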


\begin{corollary}\label{c first consequence K(H)}{\rm\cite[Corollary 2.7]{Pe2018b}} Let $H_2$ and $H_3$ be separable complex Hilbert spaces, and let us assume that $\Delta : S(K(H_2)^+)\to S(K(H_3)^+)$ is a surjective isometry. Then $\Delta$ maps $\mathcal{P}roj(K(H_2))^*$ to $\mathcal{P}roj(K(H_3))^*$, and the restriction $$\Delta|_{\mathcal{P}roj(K(H_2))^*} : \mathcal{P}roj(K(H_2))^*\to \mathcal{P}roj(K(H_3))^*$$ is a surjective isometry.$\hfill\Box$
\end{corollary}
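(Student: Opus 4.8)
The plan is to exploit the fact that the double-sphere characterisation of projections supplied by \cite[Theorem 2.5]{Pe2018b} is phrased purely in terms of distances inside $S(K(H)^+)$, and is therefore transported by any surjective isometry. The first step I would carry out is to record the elementary commutation property that for every subset $E\subseteq S(K(H_2)^+)$,
$$\Delta\left(Sph^+_{K(H_2)}(E)\right) = Sph^+_{K(H_3)}\left(\Delta(E)\right).$$
To prove the inclusion $\subseteq$, take $x\in Sph^+_{K(H_2)}(E)$, so that $\|x-b\|=1$ for every $b\in E$; since $\Delta$ is an isometry, $\|\Delta(x)-\Delta(b)\|=1$ for every $b\in E$, which says exactly that $\Delta(x)\in Sph^+_{K(H_3)}(\Delta(E))$. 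For the reverse inclusion I would run the identical argument with the surjective isometry $\Delta^{-1}$ in place of $\Delta$, using that a surjective isometry is automatically a bijection with isometric inverse.

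Next I would apply this commutation property twice, first to the singleton $E=\{a\}$ and then to $E=Sph^+_{K(H_2)}(a)$, to obtain for each $a\in S(K(H_2)^+)$ that
$$\Delta\left(Sph^+_{K(H_2)}\left(Sph^+_{K(H_2)}(a)\right)\right) = Sph^+_{K(H_3)}\left(Sph^+_{K(H_3)}(\Delta(a))\right).$$
Because $\Delta$ is injective, the left-hand set reduces to $\{\Delta(a)\}$ precisely when $Sph^+_{K(H_2)}(Sph^+_{K(H_2)}(a))=\{a\}$. Invoking \cite[Theorem 2.5]{Pe2018b} in both $K(H_2)$ and $K(H_3)$ (this is where the separability assumptions are used), the former condition characterises $a$ as a projection and the latter characterises $\Delta(a)$ as a projection, so I can conclude that $a$ is a projection if and only if $\Delta(a)$ is.

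To finish, I would recall that every non-zero projection has norm one, so that the projections contained in $S(K(H_2)^+)$ are exactly the members of $\mathcal{P}roj(K(H_2))^*$, and likewise for $H_3$. The previous step then shows that $\Delta$ carries $\mathcal{P}roj(K(H_2))^*$ into $\mathcal{P}roj(K(H_3))^*$; since $\Delta$ is a bijection whose inverse shares the same projection-preserving property, this restriction is surjective onto $\mathcal{P}roj(K(H_3))^*$, and it is an isometry simply as a restriction of $\Delta$.

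I expect the only real content of the statement to lie in the metric characterisation of projections imported from \cite[Theorem 2.5]{Pe2018b}; granting that, the corollary follows formally from the observation that the assignment $E\mapsto Sph^+(E)$ is determined solely by the metric of $S(K(H)^+)$. The single point that deserves care is the appeal to $\Delta^{-1}$ in the commutation lemma, which is legitimate exactly because $\Delta$ is assumed to be surjective.
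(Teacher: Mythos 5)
Your argument is correct and is essentially the paper's own: the corollary is quoted from \cite[Corollary 2.7]{Pe2018b}, and the paper justifies it exactly as you do, by noting that $Sph^+_A(E)$ is determined by the metric of $S(A^+)$ (hence commutes with any surjective isometry) and then invoking the double-sphere characterisation of projections from \cite[Theorem 2.5]{Pe2018b}. No further comment is needed.
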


Along this note, the closed unit ball and the dual space of a Banach space $X$ will be denoted by $\mathcal{B}_{_X}$ and $X^*$, respectively. The symbol $X^{**}$ will stand for the second dual space of $X$. Given a subset $B\subset X,$ we shall write $\mathcal{B}_{_B}$ for $\mathcal{B}_{_X}\cap B$. The shall write $A_{sa}$ for the self-adjoint part of a C$^*$-algebra $A$, while the symbol  $(A^*)^+$ will stand for the set of positive functionals on $A$. If $A$ is unital, $\textbf{1}$ will stand for its unit.\smallskip

Suppose $a$ is a positive element in the unit sphere of a von Neumann algebra $M$. The \emph{range projection} of $a$ in $M$ (denoted by $r(a)$) is the smallest projection $p$ in $M$ satisfying $a p = a$. It is known that the sequence $\left( (1/n \textbf{1}+a)^{-1} a \right)_n$ is monotone increasing to $r(a)$, and hence it converges to $r(a)$ in the weak$^*$-topology of $M$. Actually, $r(a)$ also coincides with the weak$^*$-limit of the sequence $(a^{1/n})_{n}$ in $M$  (see \cite[2.2.7]{Ped}). It is also known that the sequence $(a^{n})_{n}$ converges to a projection $s(a)=s_{_{M}}(a)$ in $M,$ which is called the \emph{support projection} of $a$ in $M$. Let us observe that  the support projection of a norm-one element in $M$ might be zero, however, for each positive element $a$ in the unit sphere of the bidual space of a C$^*$-algebra $A$ we have $  s_{_{A^{**}}} (a) \neq 0$ (compare \cite[$(2.3)$]{Pe2018b}).\smallskip

We recall next some known properties in C$^*$-algebra theory. Let $p$ be a projection in a unital C$^*$-algebra $A$. Suppose that $x\in S(A)$ satisfies $pxp =p,$ then \begin{equation}\label{eq Peirce 2 norm one}\hbox{$x = p + (\textbf{1}-p) x (\textbf{1}-p)$, \ \ }
 \end{equation} (see, for example, \cite[Lemma 3.1]{FerPe18}). Suppose that $b\in A^+$ satisfies $pbp =0,$  then \begin{equation}\label{eq pbp =0 implies p perp b for p positive}\hbox{ $p b = b p =0$,}
 \end{equation} (see \cite[$(2.2)$]{Pe2018b}). If $p$ is a non-zero projection in a C$^*$-algebra $A$, and $a$ is an element in $S(A^+)$ satisfying $p\leq a$ then \begin{equation}\label{eq new 0711} a = p + (\textbf{1}-p) a (\textbf{1}-p),\end{equation} (see \cite[$(2.4)$]{Pe2018b}).\smallskip

\section{Surjective isometries between normalized positive elements of type I von Neumann factors}\label{sec:B(H)}

Along this section $H_1$ and $H_2$ will be two complex Hilbert spaces. The main goal here is to determine when a surjective isometry $\Delta : S(B(H_1)^+)\to S(B(H_2)^+)$ can be extended to a surjective complex linear isometry from $B(H_1)$ onto $B(H_2)$. The case in which $H_1=H_2$ with dim$(H_1)<\infty$ has been positively solved by G. Nagy in \cite{Nagy2017}. In the just quoted reference, Nagy conjectures that the same statement holds true when $H$ is infinite-dimensional. The previous Corollary \ref{c first consequence} gives a generalization of \cite[Claim 1]{Nagy2017} for arbitrary complex Hilbert spaces. Our next aim is to provide a proof of the whole conjecture posed by Nagy.\smallskip

We recall next a tool that will be used throughout the rest of the paper.  Henceforth, let the symbol $\ell_2^n$ stand for an $n$-dimensional complex Hilbert space. If $p$ is a rank-one projection in $B(\ell_2^2)$, up to an appropriate representation, we can assume that $p = \left(
                                                                                                     \begin{array}{cc}
                                                                                                       1 & 0 \\
                                                                                                       0 & 0 \\
                                                                                                     \end{array}
                                                                                                   \right)$.
Given $t\in [0,1]$ the element $q_t = \left(
                                                                                                     \begin{array}{cc}
                                                                                                       t & \sqrt{t(1-t)} \\
                                                                                                       \sqrt{t(1-t)} & 1-t \\
                                                                                                     \end{array}
                                                                                                   \right)$
also is a projection in $B(\ell_2^2)$ and $\|p - q_t\|= \sqrt{1-t}$. Therefore, for each non-trivial projection $p$ in $B(\ell_2^2)$ we can find another non-trivial projection $q$ in $B(\ell_2^2)$ with $0< \|p-q\|<1$. Similar arguments show that if $H$ is a complex Hilbert space with dim$(H)\geq 2$, for each non-trivial projection $p$ in $B(H)$ we can find another non-trivial projection $q$ in $B(H)$ with $0< \|p-q\|<1$.\smallskip

Let $A$ and $B$ be C$^*$-algebras. A linear map $\Phi: A\to B$ is called a Jordan $^*$-homomorphism if $\Phi (a^*) = \Phi(a)^*$ and $\Phi (a \circ b) = \Phi (a) \circ \Phi (b)$ for all $a,b\in A$.\smallskip

Elements $a,b$ in a C$^*$-algebra $A$ are called orthogonal (written $a\perp b$) if $a b^* = b^* a =0$. It is known that $ \|a+ b\| =\max\{\|a\|,\|b\| \},$ for every $a,b\in A$ with $a\perp b$. Clearly, self-adjoint elements $a,b$ in $A$ are orthogonal if and only if $a b =0$. \smallskip

The following technical result will be needed for latter purposes.

\begin{lemma}\label{l orthomorphism preserves orthogonality} Suppose $\Delta : \mathcal{P}roj(B(H_1))\to \mathcal{P}roj(B(H_2))$ is a {\rm(}unital{\rm)} isometric order automorphism, where $H_1$ and $H_2$ are complex Hilbert spaces. Then $\Delta$ preserves orthogonality, that is, $\Delta (p) \Delta(q)=0$ whenever $p q =0$ in $\mathcal{P}roj(M)$. Furthermore, the same conclusion holds for an isometric order automorphism $\Delta : \mathcal{P}roj(K(H_1))\to \mathcal{P}roj(K(H_2))$.
\end{lemma}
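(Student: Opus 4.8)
The plan is to first reduce the statement to the behaviour of $\Delta$ on minimal projections, and then to transfer orthogonality from those minimal (rank-one) projections to arbitrary projections using the fact that $\Delta$, being a bijective order automorphism, preserves all suprema and infima that exist in $\mathcal{P}roj(B(H_1))$. Since $\Delta$ is an order automorphism it sends the least element to the least element, so $\Delta(0)=0$, and it carries atoms to atoms; as the atoms of $\mathcal{P}roj(B(H_j))$ are precisely the minimal (equivalently, rank-one) projections, $\Delta$ restricts to a bijection between the minimal projections of $B(H_1)$ and those of $B(H_2)$.

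The numerical input is the rank-one case. For unit vectors $\xi,\eta$ and the associated rank-one projections $p,q$, one computes $\|p-q\| = \sqrt{1-|\langle \xi,\eta\rangle|^2}$, so that for minimal projections we have the equivalence $\|p-q\|=1 \Leftrightarrow pq=0$. Because $\Delta$ is isometric and maps minimal projections to minimal projections, this equivalence is preserved: if $p,q$ are minimal with $pq=0$, then $\|\Delta(p)-\Delta(q)\|=\|p-q\|=1$, and since $\Delta(p),\Delta(q)$ are again minimal we conclude $\Delta(p)\Delta(q)=0$. Thus $\Delta$ preserves orthogonality of minimal projections.

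To pass to arbitrary projections $p,q\in \mathcal{P}roj(B(H_1))$ with $pq=0$, I would use that $B(H_1)$ is atomic, so $p$ and $q$ are the least upper bounds of the families $\{e_i\}$ and $\{f_j\}$ of minimal projections lying below them (the fact recalled in Section 2). From $pq=0$, $e_i\leq p$ and $f_j\leq q$ one gets $e_i f_j=0$ for all $i,j$, hence $\Delta(e_i)\Delta(f_j)=0$ by the previous paragraph. Since $\Delta$ preserves suprema, $\Delta(p)=\bigvee_i \Delta(e_i)$ and $\Delta(q)=\bigvee_j \Delta(f_j)$. Orthogonality then lifts through the joins by an order argument: $\Delta(e_i)\Delta(f_j)=0$ means $\Delta(e_i)\leq \textbf{1}-\Delta(f_j)$, so $\Delta(p)=\bigvee_i \Delta(e_i)\leq \textbf{1}-\Delta(f_j)$, i.e. $\Delta(p)\,\Delta(f_j)=0$; reading this as $\Delta(f_j)\leq \textbf{1}-\Delta(p)$ and taking the supremum over $j$ yields $\Delta(q)\leq \textbf{1}-\Delta(p)$, that is $\Delta(p)\Delta(q)=0$. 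The compact case is identical, except that every projection in $K(H_j)$ is finite-rank and hence a finite supremum of minimal projections, so only finite joins occur and no appeal to unitality is needed; there orthogonality is read off directly from the orthogonality of the (finite-dimensional) ranges.

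The main obstacle I anticipate is the clean transfer of orthogonality from atoms to arbitrary, possibly infinite-rank, projections: one must be sure both that the order automorphism genuinely preserves the infinite suprema $p=\bigvee_i e_i$ and that the inequalities $\Delta(e_i)\leq \textbf{1}-\Delta(f_j)$ can be assembled into $\Delta(p)\leq \textbf{1}-\Delta(q)$. Both reduce to the elementary but essential fact that in the projection lattice $a\perp b \Leftrightarrow a\leq \textbf{1}-b$, together with the monotonicity of the join. The rank-one distance formula is the only genuinely metric ingredient; everything else is lattice-theoretic.
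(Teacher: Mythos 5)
Your proof is correct, and its overall architecture coincides with the paper's: first settle the case of two orthogonal minimal projections, then lift to arbitrary projections via atomicity of the projection lattice. The genuine difference is in the first stage. The paper deduces orthogonality of $\Delta(e_1)$ and $\Delta(v_1)$ from $\|\Delta(e_1)-\Delta(v_1)\|=1$ by invoking \cite[Lemma 2.1]{Pe2018b}, which produces a minimal projection in the bidual $B(H_2)^{**}$ dominated by one of the two images and orthogonal to the other; minimality of the images then forces $\Delta(e_1)\perp\Delta(v_1)$. You instead use the elementary rank-one distance formula $\|p-q\|=\sqrt{1-|\langle\xi,\eta\rangle|^2}$, which gives the clean equivalence $\|p-q\|=1\Leftrightarrow pq=0$ for minimal projections and makes this step self-contained (the paper in fact records the underlying $2\times 2$ computation just before the lemma, so your ingredient is already implicitly available there); the paper's route has the advantage of reusing machinery that is needed repeatedly elsewhere in the argument. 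In the second stage the two proofs are mirror images: the paper picks minimal projections below $\Delta(p)$ and $\Delta(q)$ and pulls them back through $\Delta^{-1}$, whereas you push the atoms below $p$ and $q$ forward and assemble orthogonality through the joins via $a\perp b\Leftrightarrow a\leq \textbf{1}-b$; both rest on the same facts that an order automorphism carries atoms to atoms and preserves the suprema $p=\bigvee_i e_i$. Your remark that the $K(H)$ case only involves finite joins is also accurate.
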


\begin{proof} Let $e_1$ and $v_1$ be orthogonal minimal projections in $B(H_1)$. By hypothesis $\Delta(e_1)$ and $\Delta(v_1)$ are minimal projections, and $\Delta (e_1+v_1)$ is a projection with $\Delta(e_1+v_1) \geq \Delta(e_1),\Delta(v_1)$. Since $\| \Delta(e_1) - \Delta(v_1) \| =\|e_1-v_1\|=1,$ \cite[Lemma 2.1]{Pe2018b} assures the existence of a minimal projection $\widehat{e}\in B(H_2)^{**}$ such that one of the following statements holds:
\begin{enumerate}[$(a)$]\item $\widehat{e}\leq \Delta(e_1)$ and $\widehat{e}\perp \Delta(v_1)$ in $B(H_2)^{**}$;
\item $\widehat{e}\leq \Delta(v_1)$ and $\widehat{e}\perp \Delta(e_1)$ in $B(H_2)^{**}$.
\end{enumerate} Having in mind that $\Delta(e_1)$ and $\Delta(v_1)$ are minimal projections in $B(H_2)^{**}$ the above statements are equivalent to  \begin{enumerate}[$(a)$]\item $\widehat{e}= \Delta(e_1)$ and $\widehat{e}\perp \Delta(v_1)$ in $B(H_2)^{**}$, and hence $\Delta(e_1)\perp \Delta(v_1)$;
\item $\widehat{e}= \Delta(v_1)$ and $\widehat{e}\perp \Delta(e_1)$ in $B(H_2)^{**}$, and hence $\Delta(e_1)\perp \Delta(v_1)$.
\end{enumerate}

Now let us take two arbitrary projections $p,q\in B(H_1)$ with $p q=0$. We pick two arbitrary minimal projections $\widehat{e}_1\leq \Delta(p)$ and $\widehat{v}_1\leq \Delta(p)$. By hypothesis, there exist minimal projections $e_1$, $v_1$ in $B(H_1)$ satisfying $\Delta(e_1) =  \widehat{e}_1$, $\Delta(v_1) = \widehat{v}_1$, $e_1\leq p$ and $v_1\leq q$. The condition $pq=0$ implies $ e_1 v_1=0$. Applying the conclusion in the first paragraph we deduce that $\Delta(e_1) =  \widehat{e}_1\perp \Delta(v_1) = \widehat{v}_1$. We have therefore proved that $\widehat{e}_1\perp  \widehat{v}_1$ whenever $\widehat{e}_1$ and $\widehat{v}_1$ are minimal projections with $\widehat{e}_1\leq \Delta(p)$ and $\widehat{v}_1\leq \Delta(p)$. Since in $B(H_2)$ the projection $\Delta(p)$ (respectively, $\Delta(q)$) is the least upper bound of all minimal projections in $B(H_2)$ which are smaller than or equal to $\Delta(p)$ (respectively, $\Delta(q)$) it follows that $\Delta(p)\perp \Delta (q)$.\smallskip

If $\Delta : \mathcal{P}roj(K(H_1))\to \mathcal{P}roj(K(H_2))$ is an isometric order automorphism the conclusion follows with similar arguments.
\end{proof}

In 1951, R.V. Kadison proved that a surjective linear isometry $T$ from a unital C$^*$-algebra $A$ onto another C$^*$-algebra $B$ is of the form $T =u \Phi$, where $u$ is a unitary element in $B$ and $\Phi$ is a Jordan $^*$-isomorphism from $A$ onto $B$ (see \cite[Theorem 7]{Kad51}, see also \cite{Patt69}). In particular every unital surjective linear isometry $T: A\to B$ is a Jordan $^*$-isomorphism. Furthermore, if $A$ is a factor von Neumann algebra, then $T$ is a $^*$-isomorphism or a $^*$-anti-isomorphism. In our next result we begin with weaker hypotheses.\smallskip

\begin{proposition}\label{p first consequence bis} Let $\Delta : S(B(H_1)^+)\to S(B(H_2)^+)$ be a surjective isometry, where $H_1$ and $H_2$ are complex Hilbert spaces. Then $\Delta$ maps $\mathcal{P}roj(B(H_1))^*$ to $\mathcal{P}roj(B(H_2))^*$, and the restriction $\Delta|_{\mathcal{P}roj(B(H_1))^*} : \mathcal{P}roj(B(H_1))^*\to \mathcal{P}roj(B(H_2))^*$ is a surjective isometry and a unital order automorphism. We further know that $\Delta|_{\mathcal{P}roj(B(H_1))^*}$ preserves orthogonality.\smallskip

Consequently, if $T: B(H_1)\to B(H_2)$ is a bounded complex linear mapping such that $T(S(B(H_1)^+)) = S(B(H_2)^+)$ and $T|_{S(B(H_1)^+)} : S(B(H_1)^+) \to  S(B(H_2)^+)$ is an isometry, then $T$ is a $^*$-isomorphism or a $^*$-anti-automorphism.
\end{proposition}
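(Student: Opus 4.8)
The plan is to treat the two assertions of the Proposition separately: first the structural statement about $\Delta|_{\mathcal{P}roj(B(H_1))^*}$, and then the rigidity statement for the linear map $T$, which will follow once $T$ is seen to be positive and unital.

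\smallskip

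\emph{The projection part.} That $\Delta$ carries $\mathcal{P}roj(B(H_1))^*$ onto $\mathcal{P}roj(B(H_2))^*$ and restricts there to a surjective isometry is immediate from Corollary \ref{c first consequence}, since $B(H_1)$ and $B(H_2)$ are atomic (type I) von Neumann algebras. For unitality I would recover $\textbf{1}$ from the metric: for every nonzero projection $q\neq \textbf{1}$ the element $\textbf{1}-q$ is a nonzero projection, so $\|\textbf{1}-q\|=1$, whereas the computation displayed before Lemma \ref{l orthomorphism preserves orthogonality} shows that every \emph{nontrivial} projection $p$ admits a nontrivial projection $q$ with $0<\|p-q\|<1$. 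Thus $\textbf{1}$ is the unique element of $\mathcal{P}roj(B(H))^*$ whose distance to every other element equals $1$; being a bijective isometry of these two metric spaces, $\Delta$ must send this distinguished point to its counterpart, i.e. $\Delta(\textbf{1})=\textbf{1}$.

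\smallskip

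\emph{Order preservation (the main obstacle).} The delicate point is that $\Delta|_{\mathcal{P}roj}$ preserves the order in both directions. This \emph{cannot} be read off from the bare metric of the projection lattice: the map sending each nonzero projection $p\neq\textbf{1}$ to its complement $\textbf{1}-p$ and fixing $\textbf{1}$ is an order-\emph{reversing} surjective isometry of $\mathcal{P}roj(B(H))^*$, and for distinct projections the mutual distance is generically exactly $1$, so the order is invisible to pairwise distances among projections alone. The resolution must therefore exploit that $\Delta$ is an isometry of the \emph{whole} sphere $S(B(H_1)^+)$, not merely of its projections: since projections are metrically characterised (via $Sph^+(Sph^+(a))=\{a\}$, \cite{Pe2018b}), $\Delta$ maps non-projections to non-projections, and the preserved distances between a projection and the non-projection positive elements of $S(B(H)^+)$ are exactly the finer data that encode the order. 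Here I would push the $Sph^+$-machinery of \cite{Pe2018b} one step further to characterise the relation $p\leq q$ through the sets $Sph^+(\cdot)$ and the metric structure of $S(B(H)^+)$, thereby ruling out the order-reversing metric symmetries above. This is the step where the genuine work lies. Granting it, $\Delta|_{\mathcal{P}roj}$ is a unital isometric order automorphism, and Lemma \ref{l orthomorphism preserves orthogonality} then yields preservation of orthogonality, finishing the first assertion.

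\smallskip

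\emph{Rigidity of $T$.} For the second statement I argue as follows. By positive homogeneity $T(a)=\|a\|\,T(a/\|a\|)$ for $0\neq a\in B(H_1)^+$, and $T(S(B(H_1)^+))=S(B(H_2)^+)$ together with the isometry hypothesis force $T$ to map $B(H_1)^+$ bijectively onto $B(H_2)^+$; in particular $T$ is $^*$-preserving, positive, and (since $B(H_2)=\operatorname{span}_{\mathbb{C}}B(H_2)^+$ and $T$ is injective on the self-adjoint part) a linear bijection of $B(H_1)$ onto $B(H_2)$ whose inverse is again positive. Applying the first part of the Proposition to the surjective isometry $\Delta:=T|_{S(B(H_1)^+)}$ gives $T(\textbf{1})=\textbf{1}$, so $T$ and $T^{-1}$ are positive, unital linear maps. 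A unital positive linear map between unital C$^*$-algebras has norm one (Russo--Dye), so both $T$ and $T^{-1}$ are contractive; hence $\|x\|=\|T^{-1}T(x)\|\leq\|T(x)\|\leq\|x\|$ and $T$ is a surjective unital complex-linear isometry of $B(H_1)$ onto $B(H_2)$. By Kadison's theorem quoted above, $T=u\Phi$ with $u$ a unitary and $\Phi$ a Jordan $^*$-isomorphism; unitality forces $u=T(\textbf{1})=\textbf{1}$, so $T=\Phi$ is a Jordan $^*$-isomorphism. Finally, $B(H_1)$ being a factor, a Jordan $^*$-isomorphism onto $B(H_2)$ is either a $^*$-isomorphism or a $^*$-anti-isomorphism, as required. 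The only genuinely hard input is the order-preservation of the first part; the second part is then a short functional-analytic deduction once $T$ is known to be positive and unital.
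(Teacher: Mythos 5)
Your handling of projection-preservation, the surjectivity and isometry of the restriction, and the unitality (via the metric characterisation of $\textbf{1}$ among non-zero projections) matches the paper. Your observation that $p\mapsto \textbf{1}-p$ (fixing $\textbf{1}$) is an order-reversing isometric bijection of $\mathcal{P}roj(B(H))^*$ correctly pinpoints why the order cannot be read off the projection lattice's metric alone. But that is exactly where your argument stops: the order-preservation of $\Delta|_{\mathcal{P}roj(B(H_1))^*}$ is deferred to ``pushing the $Sph^+$-machinery one step further,'' with no actual proof. Since order-preservation is an explicit claim of the Proposition, is the hypothesis of Lemma \ref{l orthomorphism preserves orthogonality} needed for orthogonality-preservation, and feeds everything downstream in Theorem \ref{t positive Tigley for B(H)}, this is a genuine gap. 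The paper closes it not by a further $Sph^+$-characterisation of $\leq$ but by a direct test against non-projection elements of the sphere: given $p\leq q$ and a minimal projection $v\leq \textbf{1}-\Delta(q)$, set $z=v+\frac12(\textbf{1}-v)\in S(B(H_2)^+)$ and $x=\Delta^{-1}(z)$; then $\|x-\textbf{1}\|=\|z-\textbf{1}\|=\frac12$ makes $x$ invertible, the estimate $\|z-\Delta(q)\|\geq\|v(z-\Delta(q))v\|=1$ gives $\|x-q\|=1$, and \cite[Lemma 2.1]{Pe2018b} (with invertibility excluding one alternative) produces a minimal $e\leq x$ with $e\perp q$, hence $e\perp p$, hence $\|x-p\|=1$, hence $\|z-\Delta(p)\|=1$; a second application of the same lemma forces the resulting minimal projection to equal $v$, so $v\perp\Delta(p)$, and running over all such $v$ yields $\Delta(p)\leq\Delta(q)$. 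Something of this kind must be supplied for your proof to stand.

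For the second statement your route is different from the paper's and is sound modulo the first part: positive homogeneity gives $T(B(H_1)^+)=B(H_2)^+$ with positive inverse, the first part gives $T(\textbf{1})=\textbf{1}$, the Russo--Dye identity $\|T\|=\|T(\textbf{1})\|$ for unital positive maps makes $T$ and $T^{-1}$ contractive, and Kadison's theorem finishes. The paper instead verifies $T(a^2)=T(a)^2$ and $T(a)^*=T(a)$ on finite real combinations of mutually orthogonal projections and uses their norm-density in $B(H_1)_{sa}$, i.e.\ it exploits the orthogonality-preservation on projections rather than positivity of $T^{-1}$. Your variant is a legitimate and arguably cleaner alternative, but the Proposition as a whole still hinges on the unproved order-preservation step.
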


\begin{proof} Most part of the first statement is given by Corollary \ref{c first consequence}. Following an idea outlined by G. Nagy in \cite[Proof of Claim 2]{Nagy2017}, we shall begin by proving that $\Delta$ is unital. By Corollary \ref{c first consequence}, $\Delta(\textbf{1})$ is a non-zero projection. We recall that $\textbf{1}$ is the unique non-zero projection in $B(H_2)$ whose distance to any other projection is $0$ or $1$. If $\Delta(\textbf{1}) = q_0\neq \textbf{1}$, there exists a non-zero projection $q_1\in B(H_2)$ such that $0<\| q_1 - q_0\| = \| \Delta(\textbf{1}) - q_1\|<1.$ A new application of Corollary \ref{c first consequence} to $\Delta^{-1}$ implies the existence of a non-zero projection $p_1\in B(H_1)$ such that $\Delta (p_1) = q_1$. In this case we have, $p_1\neq \textbf{1}$ and $1=\|\textbf{1}-p_1\| =\| \Delta(\textbf{1}) - \Delta (p_1) \| = \| q_0 -   q_1\|<1,$ witnessing a contradiction.\smallskip

Let us prove next that $\Delta|_{\mathcal{P}roj(B(H_1))^*}$ is an order automorphism. To this aim, let us pick $p,q\in {\mathcal{P}roj(B(H_1))^*}$ with $p\leq q$. Let $v$ be a minimal projection in $B(H_2)$ such that $v\leq \textbf{1}-\Delta(q)= \Delta(\textbf{1}) -\Delta(q)$. The element $z= v +\frac12 (\textbf{1}-v)$ lies in $S(B(H_2)^+)$. Pick $x\in S(B(H_1)^+)$ satisfying $\Delta(x) = z$. Since $$\frac12=\| z -\textbf{1} \|=\| \Delta (x) - \Delta(\textbf{1}) \| = \|x-\textbf{1}\|,$$ we deduce that $x$ is invertible. Furthermore, since $$1\geq \|x-q\|=\| \Delta (x) - \Delta (q) \|=\| z - \Delta (q) \| \geq \| v  (z - \Delta (q)) v \| = \|v\|=1.$$ By Lemma 2.1 in \cite{Pe2018b} there exists a minimal projection $e$ in $B(H_1)^{**}$ such that one of the following statements holds:
\begin{enumerate}[$(a)$]\item $e\leq x$ and $e\perp q$ in $B(H_1)^{**}$;
\item $e\leq q$ and $e\perp x$ in $B(H_1)^{**}$.
\end{enumerate} Case $(b)$ is impossible because $x$ is invertible in $B(H_1)$ (and hence in $B(H_1)^{**}$). Therefore $e\leq x$ and $e\perp q$, which implies that $e\perp p$, because $p\leq q$. Therefore, \cite[Lemma 2.1]{Pe2018b} implies that $1=\| x- p \| =\|\Delta(x) - \Delta(p) \| =\|z-\Delta (p) \|$. A new application of \cite[Lemma 2.1]{Pe2018b} assures the existence of a minimal projection $w$ in $B(H_2)^{**}$ such that one of the following statements holds:
\begin{enumerate}[$(a)$]\item $w\leq z$ and $w\perp \Delta(p)$ in $B(H_2)^{**}$;
\item $w\leq \Delta(p)$ and $w\perp z$ in $B(H_2)^{**}$.
\end{enumerate} As before, case $(b)$ is impossible because $z$ is invertible in $B(H_2)$. Therefore $w\leq z = v +\frac12 (\textbf{1}-v)$ and $w\perp \Delta(p)$. It can be easily deduced from the minimality of $w$ in $B(H_2)^{**}$ and the minimality of $v$ in $B(H_2)$ that $v= w\perp \Delta(p)$. We have therefore shown that $\Delta(p)$ is orthogonal to every minimal projection $v$ in $B(H_2)$ with $v\leq \textbf{1}-\Delta(q)$, and consequently $\textbf{1}-\Delta(q) \leq \textbf{1}-\Delta(p)$, or equivalently, $\Delta(p)\leq \Delta(q)$.\smallskip

The statement affirming that $\Delta|_{\mathcal{P}roj(B(H_1))^*}$ preserves orthogonality can be derived from Lemma \ref{l orthomorphism preserves orthogonality}.\smallskip

To prove the final statement, let $T: B(H_1)\to B(H_2)$ be a linear mapping such that $T(S(B(H_1)^+)) = S(B(H_2)^+)$ and $T|_{S(B(H_1)^+)} : S(B(H_1)^+) \to  S(B(H_2)^+)$ is an isometry. By applying the conclusion of the first statement, we deduce that $T|_{S(B(H_1)^+)}$ maps $\mathcal{P}roj(B(H_1))^*$ onto $\mathcal{P}roj(B(H_2))^*$, and the restricted mapping $T|_{\mathcal{P}roj(B(H_1))^*} : \mathcal{P}roj(B(H_1))^*\to \mathcal{P}roj(B(H_2))^*$ is a surjective isometry and a unital order automorphism. Clearly, $T$ preserves projections and orthogonality among them (just observe that the sum of two projections is a projection if and only if they are orthogonal). Since every hermitian element in a von Neumann algebra can be approximated in norm by a finite real linear combination of mutually orthogonal projections (see \cite[Proposition 1.3.1]{S}), and by the above properties $T(a^2) = T(a)^2$ and $T(a) = T(a)^*$, whenever $a$ is a finite real linear combination of mutually orthogonal projections, we deduce that $T(b^2) = T(b)^2$ and $T(b)^* = T(b)$ for every hermitian element $b$ in $B(H_1)$. It is well known that this is equivalent to say that $T$ is a Jordan $^*$-isomorphism. The rest follows from \cite[Corolary 11]{Kad51} because $B(H_1)$ is a factor.
\end{proof}

%
%

We continue with an analogue of \cite[Claim 3]{Nagy2017}.

\begin{lemma}\label{l delta preserves supports} Let $\Delta : S(B(H_1)^+)\to S(B(H_2)^+)$ be a surjective isometry, where $H_1$ and $H_2$ are complex Hilbert spaces. Let $p_0,p_1,\ldots, p_m$ be mutually orthogonal projections with $\displaystyle \sum_{k=0}^m p_k =\textbf{1}$, and let $\lambda_1,\ldots,
\lambda_m$ be real numbers in the interval $(0,1)$. Then $\displaystyle s_{_{B(H_2)}} \left(\Delta \left(p_0+\sum_{k=1}^m \lambda_k p_k \right)\right) = \Delta(p_0)$.
\end{lemma}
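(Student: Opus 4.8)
The plan is to show that the element $a = p_0 + \sum_{k=1}^m \lambda_k p_k$ has range projection and support projection both equal to $\mathbf{1}$, while $\Delta(a)$ must have support projection exactly $\Delta(p_0)$. The guiding principle is that $p_0$ is the \emph{spectral projection of $a$ at the eigenvalue $1$}, i.e., $p_0 = s_{_{B(H_1)}}(a)$, and I want to recover $p_0$ from purely metric data that $\Delta$ preserves. First I would record the elementary spectral facts: since the $\lambda_k$ lie in $(0,1)$, the element $a$ is invertible with $\|a\|=1$, its spectrum is $\{1,\lambda_1,\ldots,\lambda_m\}$, and $p_0 = \lim_n a^n = s_{_{B(H_1)}}(a)$ is the support projection, with $a p_0 = p_0 = p_0 a$ and $p_0 \leq a$.

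The key metric characterization I would aim to exploit is a distance formula for $Sph^+$: for a projection $q$ and the point $a$, the condition $\|a - q\| = 1$ should detect exactly how $q$ sits relative to the spectral decomposition of $a$. More precisely, I would try to prove that for a minimal projection $v \in B(H_2)$, one has $v \leq \Delta(p_0)$ if and only if $v$ satisfies the metric condition that is the $\Delta$-image of "$e \leq p_0$" for minimal $e \in B(H_1)$. Since $\Delta$ is a surjective isometry carrying $\mathcal{P}roj(B(H_1))^*$ onto $\mathcal{P}roj(B(H_2))^*$ as a unital order automorphism preserving orthogonality (Proposition \ref{p first consequence bis}), it suffices to characterize $p_0$ among projections $p$ by distance conditions involving only $a$ and the projection lattice. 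The natural candidate is: a minimal projection $e$ satisfies $e \leq p_0$ precisely when $\|a - e\| = \|\mathbf{1} - e\|$, equivalently when the restriction of $a$ "sees $e$ as an eigenvector of eigenvalue $1$"; a minimal projection $e$ with $e \perp p_0$ or $e$ lying under some $p_k$ will instead satisfy a strictly different distance to $a$. Running this through $\Delta$ and using \cite[Lemma 2.1]{Pe2018b} (the two-alternative lemma used repeatedly above) should let me transfer the characterization to $\Delta(a)$ and conclude that the minimal projections under $s_{_{B(H_2)}}(\Delta(a))$ are exactly those under $\Delta(p_0)$, whence the two projections coincide by taking least upper bounds.

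Concretely, the mechanism I expect to use mirrors the proof of Proposition \ref{p first consequence bis}: I would take a minimal projection $v \leq s_{_{B(H_2)}}(\Delta(a))$ (so $v \leq \Delta(a)$ in the relevant sense, i.e. $v\,\Delta(a)\,v = v$), choose a preimage under $\Delta$, and feed the equation $\|\Delta(a) - v'\| = 1$ through the isometry and Lemma 2.1 to force an alternative of the form "$e \leq a$" versus "$e \perp a$"; invertibility of $a$ kills the orthogonality branch, and minimality pins down that the associated spectral value is $1$, forcing $e \leq p_0$. Conversely, starting from $e \leq p_0$ and using $p_0 \leq a$ together with \eqref{eq new 0711}, I would show the image lands under $\Delta(p_0)$. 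The hard part, and the step I expect to be the main obstacle, is the quantitative spectral computation: verifying that the precise distance $\|a - q\|$ (or $\|a - v\|$ for minimal $v$) distinguishes the eigenvalue $1$ from the eigenvalues $\lambda_k < 1$, and doing so in a way that is stable under passing to the biduals $B(H_i)^{**}$ where the minimal projections produced by Lemma 2.1 actually live. Handling the bidual carefully — ensuring the support projection computed in $B(H_2)$ agrees with what the metric argument produces and that $\Delta(p_0)$ (a genuine projection in $B(H_2)$) is indeed the supremum of the minimal projections isolated above — is where the argument must be watertight.
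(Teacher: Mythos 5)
Your overall strategy coincides with the paper's: show that a minimal projection $v$ satisfies $v\leq p_0$ if and only if a purely metric condition holds that $\Delta$ transports, then identify $\Delta(p_0)$ and $s_{_{B(H_2)}}(\Delta(a))$ as least upper bounds of the same family of minimal projections. The gap is in the metric test itself, which you explicitly leave as ``the main obstacle'' and whose candidate version is wrong. You propose to detect $e\leq p_0$ via $\|a-e\|=\|\mathbf{1}-e\|$, i.e.\ $\|a-e\|=1$, and later to feed $\|\Delta(a)-v'\|=1$ (distance to the \emph{minimal} projection) into \cite[Lemma 2.1]{Pe2018b}. Neither works. For the first: if $e\leq p_1$ is minimal, then $a-e=p_0+(\lambda_1 p_1-e)+\sum_{k\geq2}\lambda_k p_k$ is an orthogonal sum whose norm is $\max\{1,\lambda_1,1-\lambda_1,\lambda_2,\dots\}=1$, so $\|a-e\|=1$ also holds for minimal projections that are \emph{not} under $p_0$; this distance does not separate the eigenvalue $1$ from the eigenvalues $\lambda_k$. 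For the second: applying Lemma 2.1 to $\|\Delta(a)-v'\|=1$ with $v'$ minimal yields (after invertibility kills the branch $e\perp\Delta(a)$) only that \emph{some} minimal $e$ satisfies $e\leq\Delta(a)$ and $e\perp v'$ --- a statement about an auxiliary projection orthogonal to $v'$ that says nothing about $v'$ itself.

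The correct test, and the one the paper uses, is the distance to the \emph{co-minimal} projection: for invertible $a\in S(B(H)^+)$ and minimal $v$, one has $v\leq a$ (equivalently $v\leq p_0$, since $p_0$ is the spectral projection of $a$ at $1$) if and only if $\|a-(\mathbf{1}-v)\|=1$. Here Lemma 2.1 produces a minimal $e$ with either $e\leq a$ and $e\perp(\mathbf{1}-v)$, which forces $e=v$ by minimality of $v$ and hence $v\leq a$, or $e\leq\mathbf{1}-v$ and $e\perp a$, which invertibility excludes; the converse implication follows from \eqref{eq new 0711}. This condition transports under $\Delta$ because Proposition \ref{p first consequence bis} gives $\Delta(\mathbf{1}-v)=\mathbf{1}-\Delta(v)$ (unitality, order preservation and orthogonality preservation, applied to $\Delta$ and $\Delta^{-1}$). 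With this replacement your argument closes exactly as you outline: $v\leq p_0\Leftrightarrow\|a-(\mathbf{1}-v)\|=1\Leftrightarrow\|\Delta(a)-(\mathbf{1}-\Delta(v))\|=1\Leftrightarrow\Delta(v)\leq\Delta(a)$, and since $\Delta$ is an order automorphism on projections, both $\Delta(p_0)$ and $s_{_{B(H_2)}}(\Delta(a))$ are the supremum of the same set of minimal projections. Your worry about the bidual is not an issue: minimal projections of $B(H_2)$ remain minimal in $B(H_2)^{**}$ and the alternative above is resolved inside $B(H_2)$.
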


\begin{proof} Set $\displaystyle a= p_0+\sum_{k=1}^m \lambda_k p_k.$ Since $\Delta (\textbf{1}) =\textbf{1}$ and $\|\Delta(a) - \textbf{1} \|= \|\Delta(a) - \Delta (\textbf{1}) \|= \| a-\textbf{1}\| = \max\{ 1- \lambda_k : k =1,\ldots, m\}<1,$ we deduce that $a$ and $\Delta(a)$ both are invertible elements.\smallskip

Let $\widehat{v}$ be a minimal projection in $B(H_2)$.  By Proposition \ref{p first consequence bis}, there exists a minimal projection $v$ in $B(H_1)$ satisfying $\Delta (v) = \widehat{v}$. By the hypothesis on $\Delta$ and  Proposition \ref{p first consequence bis}, we have $\| a - (\textbf{1}-v)\| = 1$ if and only if $\| \Delta(a) - \Delta(\textbf{1}-v)\| =\| \Delta(a) - (\textbf{1}-\Delta(v))\| = 1$. Combining the invertibility of $a$ and $\Delta (a)$, and the minimality of $v$ and $\Delta (v)$ with Lemma 2.1 in \cite{Pe2018b}, we deduce that $$v\leq p_0 \Leftrightarrow  v\leq a \Leftrightarrow \| a - (\textbf{1}-v)\| = 1 \Leftrightarrow \| \Delta(a) - (\textbf{1}-\Delta(v))\| = 1\Leftrightarrow \Delta(v) \leq \Delta (a).$$ Therefore, a minimal projection $v$ satisfies $v\leq p_0$ if and only if $v\leq a$ if and only if $\Delta(v) \leq \Delta (a)$ if and only if $\Delta(v) \leq \Delta (p_0)$.\smallskip

Take a minimal projection $\widehat{v}\in B(H_2)$ such that $\widehat{v}=\Delta(v)\leq \Delta(p_0)$. We know from the above that $\widehat{v} \leq \Delta (a),$ and ${v}\leq a$. Since in $B(H_2)$ every projection $q$ is the least upper bound of all minimal projections $\widehat{v}$ with $\widehat{v}\leq q$, we deduce that $\Delta(p_0)\leq \Delta(a),$ and hence $ \Delta(p_0) \leq s_{_{B(H_2)}} (\Delta (a ))$. Another application of the above property shows that $\widehat{v}\leq \Delta(p_0)$ for every minimal projection $\widehat{v}\in B(H_2)$ with $\widehat{v}\leq s_{_{B(H_2)}} (\Delta (a ))\leq \Delta(a)$. Therefore $s_{_{B(H_2)}} (\Delta (a ))=\Delta(p_0)$.
\end{proof}

Accordingly to the usual notation, given a C$^*$-algebra $A$, the symbol $S(\hbox{Inv} (A)^+)$ will denote the set of all positive invertible elements in $S(A)$. A projection $p$ in a unital C$^*$-algebra $A$ will be called \emph{co-minimal} if $1-p$ is a minimal projection in $A$. The symbol $\hbox{co-min-}\mathcal{P}roj(A)$ will stand for the set of all co-minimal projections in $A$.

\begin{theorem}\label{t main technical theorem with bispherical for co-minimal} Let $a$ be an invertible element in $S(B(H)^+)$, where $H$ is an infinite-dimensional complex Hilbert space. Suppose that $s_{_{B(H)}} (a)\neq 0$. Then the following statements hold:
\begin{enumerate}[$(a)$]\item $Sph(a; \hbox{co-min-}\mathcal{P}roj(B(H))) =\{ p\in \hbox{co-min-}\mathcal{P}roj(B(H)) : \textbf{1}-p \leq s_{_{B(H)}} (a)  \};$
\item The identity $$Sph(Sph(a; \hbox{co-min-}\mathcal{P}roj(B(H))); S(\hbox{Inv} (B(H))^+)) $$ $$=\{ x\in S(\hbox{Inv} (B(H))^+) :  s_{_{B(H)}} (a) \leq x \}$$ holds.
\end{enumerate}
\end{theorem}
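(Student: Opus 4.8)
The plan is to prove the two set equalities by establishing both inclusions, working in the Peirce decomposition relative to the support projection $s=s_{_{B(H)}}(a)$. First I would analyze the single-element sphere condition in part $(a)$. Let $p$ be a co-minimal projection, so $\textbf{1}-p$ is a rank-one (minimal) projection; write $v=\textbf{1}-p$. The distance $\|a-p\|$ must equal $1$. Since $a$ is invertible with $0<a\leq\textbf{1}$, the only way to reach norm $1$ in the difference $a-p$ is for the ``defect'' of $a$ from $\textbf{1}$ to align with the rank-one complement $v$. More precisely, I would argue that $\|a-p\|=1$ exactly when the minimal projection $v$ lies below the support projection $s$ of $a$, i.e. $a v=v$ (equivalently $sv=v$). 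The forward direction uses that if $v\not\leq s$, then $a$ restricted to the range of $v$ has norm strictly less than $1$ while $p$ covers the orthogonal part, forcing $\|a-p\|<1$; the reverse direction uses \eqref{eq Peirce 2 norm one}-type Peirce arguments together with the orthogonality relation \eqref{eq pbp =0 implies p perp b for p positive}. This is essentially a computation in the $2\times 2$ corner determined by $v$ and its complement.

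For part $(b)$, let $\mathcal{C}=Sph(a;\hbox{co-min-}\mathcal{P}roj(B(H)))$, which by part $(a)$ is the family of co-minimal projections $p$ whose minimal complement $v=\textbf{1}-p$ satisfies $v\leq s$. I would prove the equality by characterizing those $x\in S(\hbox{Inv}(B(H))^+)$ with $\|x-p\|=1$ for \emph{every} $p\in\mathcal{C}$. The inclusion $\supseteq$ is the routine one: if $s\leq x$, then for each $p\in\mathcal{C}$ the complement $v=\textbf{1}-p\leq s\leq x$, so $xv=v$, and I can compute $\|x-p\|=\|x-(\textbf{1}-v)\|\geq\|v(x-(\textbf{1}-v))v\|=\|v\|=1$; combined with $\|x-p\|\leq 1$ (both in the closed unit ball of positives) this forces equality. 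The harder inclusion $\subseteq$ requires showing that if $x$ is positive, invertible, norm-one, and satisfies $\|x-p\|=1$ for all such $p$, then $s\leq x$. Here I would invoke the minimal-projection dichotomy of \cite[Lemma 2.1]{Pe2018b} exactly as in the proofs of Lemma \ref{l delta preserves supports} and Proposition \ref{p first consequence bis}: for each minimal $v\leq s$, choosing $p=\textbf{1}-v\in\mathcal{C}$, the condition $\|x-(\textbf{1}-v)\|=1$ together with the invertibility of $x$ rules out case $(b)$ of that lemma and yields $v\leq x$. Since $s$ is the least upper bound of the minimal projections $v\leq s$, taking the supremum over all such $v$ gives $s\leq x$.

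The main obstacle I anticipate is the careful handling of the infinite-dimensional supremum in the final step: I must ensure that ``$v\leq x$ for every minimal $v\leq s$'' genuinely forces $s\leq x$ as an operator inequality, not merely a statement about ranges. This is where the structure of $B(H)$ as an atomic factor is essential --- every projection, in particular $s$, is the least upper bound of the minimal projections beneath it, and the order structure of positive operators is compatible with suprema of projections. I would also need to be attentive in part $(a)$ to the requirement $s_{_{B(H)}}(a)\neq 0$, which guarantees that $\mathcal{C}$ is nonempty (so that the bispherical condition in $(b)$ is not vacuous) and that there exists at least one minimal projection below $s$; without this the characterizations would degenerate. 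The invertibility hypotheses on both $a$ and the competitor $x$ are what allow me to systematically discard the ``wrong'' case of the \cite[Lemma 2.1]{Pe2018b} dichotomy throughout, just as in the preceding results.
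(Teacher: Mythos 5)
Your proposal follows essentially the same route as the paper: both parts rest on the equivalence, for a minimal projection $v$ and an invertible element of $S(B(H)^+)$, between $v\leq s_{_{B(H)}}(\cdot)$ and the distance to $\textbf{1}-v$ being $1$ (obtained from the dichotomy of \cite[Lemma 2.1]{Pe2018b}, with invertibility discarding one branch), and the inclusion $\subseteq$ in $(b)$ is closed exactly as in the paper by passing to the support projection $s_{_{B(H)}}(x)$ before taking the lattice supremum over minimal $v\leq s_{_{B(H)}}(a)$. The only cosmetic difference is that you phrase the forward implication in $(a)$ as a direct $2\times 2$ corner estimate, whereas the paper invokes the same dichotomy there as well; both are correct.
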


\begin{proof}$(a)$ Let $v$ be a minimal projection in $B(H)$.  Combining the invertibility of $a$, and the minimality of $v$ with \cite[Lemma 2.1]{Pe2018b} it can be seen that $$ v\leq a \Leftrightarrow \| a - (\textbf{1}-v)\| = 1.$$ Therefore, for each minimal projection $v$ in $B(H)$ we have \begin{equation}\label{eq new 1511} v\leq s_{_{B(H)}} (a)\leq a \hbox{ if and only if } \|a-(\textbf{1}-v)\| =1,
\end{equation} (compare \eqref{eq new 0711}).\smallskip

$(\supseteq)$ Take $p\in \hbox{co-min-}\mathcal{P}roj(B(H))$ with $\textbf{1}-p \leq s_{_{B(H)}} (a)$. Applying \eqref{eq new 1511} with $v= \textbf{1}-p$ we get $\| a - p\| =1$.\smallskip

$(\subseteq)$ Take now $p\in \hbox{co-min-}\mathcal{P}roj(B(H))$ with $\| a- (\textbf{1}-(\textbf{1}-p))\|=\| a - p\| =1$. We deduce from \eqref{eq new 1511} that $\textbf{1}-p \leq s_{_{B(H)}} (a)\leq a$.\smallskip

$(b)$ $(\supseteq)$  Let us take $x\in S(\hbox{Inv} (B(H))^+)$ satisfying $s_{_{B(H)}} (a) \leq x$. For each $p \in \hbox{co-min-}\mathcal{P}roj(B(H)))$ with $\| a-p\| =1$, we know from $(a)$ that $\textbf{1}-p \leq s_{_{B(H)}} (a)\leq x$. Applying the statement in \eqref{eq new 0711} we have $\textbf{1}-p \leq s_{_{B(H)}} (x)$. A new application of $(a)$ to the element $x$ gives $\| x - p \| =1$. This shows that $x$ lies in $Sph(Sph(a; \hbox{co-min-}\mathcal{P}roj(B(H))); S(\hbox{Inv} (B(H))^+))$.\smallskip

$(\subseteq)$ Take $x\in S(\hbox{Inv} (B(H))^+)$ satisfying $\|x-p\| = 1$ for every projection $p$ in $Sph(a; \hbox{co-min-}\mathcal{P}roj(B(H)))$. Applying $(a)$, it can be seen that, for every minimal projection $v$ in $B(H)$ with $v\leq s_{_{B(H)}} (a)$ we have $$\textbf{1}-v\in Sph(a; \hbox{co-min-}\mathcal{P}roj(B(H))),$$ and hence $\|x-(\textbf{1}-v) \|=1$. Since $x \in S(\hbox{Inv} (B(H))^+)$ and $v$ is minimal, it follows from $(a)$ that $v \leq s_{_{B(H)}} (x)$. We have proved that $v \leq s_{_{B(H)}} (x)\leq x$ whenever $v$ is a minimal projection with $v\leq s_{_{B(H)}} (a)$. Therefore $s_{_{B(H)}} (a)\leq x.$
\end{proof}

The next lemma is a simple observation.

\begin{lemma}\label{l positive invertible are preserved} Let $\Delta : S(A^+)\to S(B^+)$ be a surjective isometry, where $A$ and $B$ are unital C$^*$-algebras. Suppose $\Delta(\textbf{1}) =\textbf{1}$. Then $\Delta (S(\hbox{Inv} (A)^+)) = S(\hbox{Inv} (B)^+)$.
\end{lemma}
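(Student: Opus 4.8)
The plan is to reduce everything to a single spectral characterization: for a positive norm-one element, invertibility is detected purely by the distance to the unit. Concretely, I would first establish that for $a\in S(A^+)$ one has
$$
a\in S(\hbox{Inv}(A)^+) \quad\Longleftrightarrow\quad \|a-\textbf{1}\|<1 .
$$
This is the crux, and it follows from the continuous functional calculus. Since $a\geq 0$ with $\|a\|=1$, the spectrum satisfies $\sigma(a)\subseteq[0,1]$ and, because the spectral radius of a positive element equals its norm, $\max\sigma(a)=1$. Hence $\textbf{1}-a$ is again a positive element whose spectrum is $\{1-t : t\in\sigma(a)\}$, so
$$
\|\textbf{1}-a\| = \max\{\,1-t : t\in\sigma(a)\,\} = 1-\min\sigma(a).
$$
Therefore $\|a-\textbf{1}\|<1$ if and only if $\min\sigma(a)>0$, which is exactly the condition $0\notin\sigma(a)$, i.e.\ the invertibility of $a$.

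Having this metric description in both algebras $A$ and $B$, the transfer is immediate. I would use the hypotheses that $\Delta$ is an isometry and that $\Delta(\textbf{1})=\textbf{1}$ to write, for every $a\in S(A^+)$,
$$
\|\Delta(a)-\textbf{1}\| = \|\Delta(a)-\Delta(\textbf{1})\| = \|a-\textbf{1}\|.
$$
Combining this equality with the characterization above applied first in $A$ and then in $B$ yields the chain of equivalences
$$
a\in S(\hbox{Inv}(A)^+) \ \Leftrightarrow\ \|a-\textbf{1}\|<1 \ \Leftrightarrow\ \|\Delta(a)-\textbf{1}\|<1 \ \Leftrightarrow\ \Delta(a)\in S(\hbox{Inv}(B)^+).
$$
This shows $\Delta\big(S(\hbox{Inv}(A)^+)\big)\subseteq S(\hbox{Inv}(B)^+)$ and, reading the equivalences backwards, that any image of a non-invertible element is non-invertible.

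Finally, surjectivity of $\Delta$ promotes the inclusion to an equality: given any $b\in S(\hbox{Inv}(B)^+)$, pick $a\in S(A^+)$ with $\Delta(a)=b$; the equivalence forces $a\in S(\hbox{Inv}(A)^+)$, so $b$ lies in the image of $S(\hbox{Inv}(A)^+)$ under $\Delta$. Hence $\Delta\big(S(\hbox{Inv}(A)^+)\big)=S(\hbox{Inv}(B)^+)$, as desired. I do not anticipate a genuine obstacle here, in keeping with the remark that this is a simple observation; the only point requiring a little care is the elementary spectral computation of $\|\textbf{1}-a\|$ for positive norm-one $a$, and verifying that the assumption $\Delta(\textbf{1})=\textbf{1}$ is precisely what makes the unit the correct ``test point'' for detecting invertibility.
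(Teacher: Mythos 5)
Your proposal is correct and follows essentially the same route as the paper: the paper's proof also rests on the observation that $b\in S(\hbox{Inv}(A)^+)$ if and only if $\|b-\textbf{1}\|<1$, and then transfers this through the isometry using $\Delta(\textbf{1})=\textbf{1}$. You merely supply the elementary spectral computation behind that equivalence, which the paper states without proof.
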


\begin{proof} We observe that an element $b\in S(A^+)$ is invertible if and only if the inequality $\|a-\textbf{1}\|<1$ holds. Therefore $b\in S(\hbox{Inv} (A)^+)$ if and only if $\| b-\textbf{1}\| <1$ if and only if $\| \Delta(b)-\Delta(\textbf{1})\|= \| \Delta(b)-\textbf{1} \| <1$ if and only if $\Delta (b) \in S(\hbox{Inv} (B)^+)$.
\end{proof}

We are now in position to establish the main result of this section, which proves the conjecture posed by G. Nagy in \cite[\S 3]{Nagy2017}.

\begin{theorem}\label{t positive Tigley for B(H)} Let $\Delta : S(B(H_1)^+)\to S(B(H_2)^+)$ be a surjective isometry, where $H_1$ and $H_2$ are complex Hilbert spaces. Then there exists a surjective complex linear isometry {\rm(}actually, a $^*$-isomorphism or a $^*$-anti-automorphism{\rm)} $T: B(H_1)\to B(H_2)$ satisfying $\Delta (x) = T(x)$ for all $x\in S(B(H_1)^+)$.
\end{theorem}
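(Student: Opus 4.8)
The plan is to first transfer all the information to the lattice of projections, extend it linearly by a Gleason-type theorem, and only afterwards check that the resulting linear map agrees with $\Delta$ on the whole positive sphere.

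By Proposition \ref{p first consequence bis}, the restriction $\Delta|_{\mathcal{P}roj(B(H_1))^*}$ is a unital isometric order automorphism onto $\mathcal{P}roj(B(H_2))^*$ preserving orthogonality, and the same holds for $\Delta^{-1}$. First I would upgrade this to \emph{finite additivity}: if $p\perp q$ are non-zero projections, then $\Delta(p)\perp\Delta(q)$ and $\Delta(p)+\Delta(q)\le \Delta(p+q)$; a minimal projection $\widehat v=\Delta(v)\le \Delta(p+q)$ orthogonal to $\Delta(p)+\Delta(q)$ would, after pulling back through the order- and orthogonality-preserving $\Delta^{-1}$, yield a minimal $v\le p+q$ with $v\perp p$ and $v\perp q$, forcing $v=0$; hence $\Delta(p+q)=\Delta(p)+\Delta(q)$, and in particular $\Delta(\textbf{1}-p)=\textbf{1}-\Delta(p)$. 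Setting $\mu(0)=0$ and $\mu(p)=\Delta(p)$ then produces a bounded (each $\|\mu(p)\|=1$) finitely additive $B(H_2)$-valued measure on $\mathcal{P}roj(B(H_1))$. When $\dim H_1\ge 3$ the factor $B(H_1)$ has no type $I_2$ summand, so the Bunce--Wright--Mackey--Gleason theorem \cite{BuWri92} extends $\mu$ to a bounded complex linear $T:B(H_1)\to B(H_2)$ with $T|_{\mathcal{P}roj}=\Delta|_{\mathcal{P}roj}$; the cases $\dim H_1\le 2$ (hence $\dim H_2\le 2$, since an order isomorphism of projection lattices preserves dimension) are finite-dimensional and already settled in \cite{Nagy2017}. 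Exactly as in the last paragraph of the proof of Proposition \ref{p first consequence bis}, $T$ preserves projections and their orthogonality, whence $T(b^2)=T(b)^2$ and $T(b)^*=T(b)$ for every hermitian $b$, so $T$ is a Jordan $^*$-isomorphism; being onto the factor $B(H_2)$, Kadison's theorem \cite[Corollary 11]{Kad51} makes it a $^*$-isomorphism or a $^*$-anti-automorphism.

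It remains to prove $T(a)=\Delta(a)$ for all $a\in S(B(H_1)^+)$. Since $T$ is a surjective order isometry, $\Psi:=T^{-1}\circ\Delta$ is a surjective isometry of $S(B(H_1)^+)$ with $\Psi(p)=T^{-1}(T(p))=p$ for every projection $p$; it suffices to show $\Psi=\mathrm{id}$. By Lemma \ref{l positive invertible are preserved}, $\Psi$ preserves $S(\hbox{Inv}(B(H_1))^+)$, and it fixes every co-minimal projection. Hence, for an invertible $a$ with $s_{_{B(H_1)}}(a)\ne 0$, Theorem \ref{t main technical theorem with bispherical for co-minimal}$(a)$ gives $Sph(a;\hbox{co-min-}\mathcal{P}roj(B(H_1)))=Sph(\Psi(a);\hbox{co-min-}\mathcal{P}roj(B(H_1)))$, and comparing both descriptions forces $s_{_{B(H_1)}}(\Psi(a))=s_{_{B(H_1)}}(a)$ (this is also what Lemma \ref{l delta preserves supports} yields for $\Psi$). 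Writing $p_0:=s_{_{B(H_1)}}(a)$ we obtain $p_0\le a$ and $p_0\le\Psi(a)$, so $\Psi(a)=p_0+b$ and $a=p_0+a_1$ with $a_1,b$ positive contractions in the corner $(\textbf{1}-p_0)B(H_1)(\textbf{1}-p_0)$; testing $\Psi$ against the projections $p_0+r$, with $r\le \textbf{1}-p_0$, collapses to $\|b-r\|=\|a_1-r\|$ for every projection $r$ of the corner.

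The main obstacle is this final reconstruction: one must show that a positive contraction of $B(K)$ is determined by its distances to the projections of $B(K)$. I would prove this on the norm-dense set of finite-spectrum elements by peeling: the distance to $0$ returns the top eigenvalue, the support-projection characterization applied inside the corner returns the corresponding spectral projection, and subtracting it decreases the number of eigenvalues, so an induction on the finite spectrum, together with the continuity of $\Psi$, gives $b=a_1$, i.e.\ $\Psi(a)=a$. Since the invertible finite-spectrum elements having $1$ as an eigenvalue are dense in $S(B(H_1)^+)$ and $\Psi$ is continuous, $\Psi=\mathrm{id}$ follows, whence $\Delta=T|_{S(B(H_1)^+)}$ and $T$ is the asserted $^*$-isomorphism or $^*$-anti-automorphism.
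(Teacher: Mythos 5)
Your first two thirds track the paper's proof almost exactly: the finite additivity of $p\mapsto\Delta(p)$ (your minimal-projection pullback argument is a harmless variant of the paper's lub argument), the Bunce--Wright--Mackey--Gleason extension to a bounded linear $T$, the observation that the type $I_2$ obstruction is irrelevant because the low-dimensional cases are covered by Nagy's finite-dimensional theorem, and the identification of $T$ as a $^*$-isomorphism or $^*$-anti-automorphism via Proposition \ref{p first consequence bis} and Kadison. The reduction to showing $\Psi:=T^{-1}\circ\Delta=\mathrm{id}$, the identity $s_{_{B(H_1)}}(\Psi(a))=s_{_{B(H_1)}}(a)$ for invertible $a$, and the resulting relation $\|b-r\|=\|a_1-r\|$ for all projections $r$ of the corner are all correct.

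The genuine gap is the step you yourself flag as the main obstacle: you never prove that a positive contraction of $B(K)$ is determined by its distances to the projections of $B(K)$, and the peeling sketch does not go through. First, the support-projection characterization (equation \eqref{eq new 1511} and Theorem \ref{t main technical theorem with bispherical for co-minimal}) identifies the spectral projection at the eigenvalue $1$ of a \emph{norm-one invertible} element via distances to co-minimal projections; the top eigenvalue of $a_1$ is $\|a_1\|<1$, and rescaling to $a_1/\|a_1\|$ destroys the available data, since $\|a_1/\|a_1\|-r\|$ is not a function of the numbers $\|a_1-r'\|$. Second, even granting that the top spectral projection $e_1$ and eigenvalue $\lambda_1$ of $a_1$ have been matched with those of $b$, the inductive step would require comparing distances to elements of the form $\lambda_1 e_1+r$, which are not projections, so the hypothesis ``equal distances to projections'' does not propagate to the smaller corner. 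Third, $b=(\textbf{1}-p_0)\Psi(a)(\textbf{1}-p_0)$ is merely a positive contraction; nothing yet forces it to have finite spectrum, so an induction ``on the finite spectrum'' is an induction on $a_1$ alone and each peeling step must be justified against an arbitrary $b$ --- precisely the part that is missing. The paper avoids this reconstruction problem entirely: it shows that $\Delta$ carries the metrically characterized convex body $p_0+\mathcal{B}_{_{Inv(B(K_1)^+)}}$ onto $\Delta(p_0)+\mathcal{B}_{_{Inv(B(K_2)^+)}}$, applies Mankiewicz's theorem to linearize $\Delta$ there in one stroke, uses \cite[Theorem 1.4]{IsRod1995} to make the resulting affine isometry a (possibly negated) Jordan $^*$-isomorphism, rules out the negative case by a norm computation, and then matches it with $T$ on projections. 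To complete your argument you would either have to supply a full proof of the distance-to-projections rigidity lemma or import this Mankiewicz step, at which point you are back to the paper's route.
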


\begin{proof} Proposition \ref{p first consequence bis} implies that $$\Delta|_{\mathcal{P}roj(B(H_1))^*} : \mathcal{P}roj(B(H_1))^*\to \mathcal{P}roj(B(H_2))^*$$ is a surjective isometry and a unital order automorphism.\smallskip

If dim$(H_1)$ is finite, it can be easily seen from the above that dim$(H_1)=$dim$(H_2)$, just observe that dim$(H)$($<\infty$) is precisely the cardinality of every maximal set of minimal projections in $B(H)$. In this case, the desired conclusion was established by G. Nagy in \cite[Theorem]{Nagy2017}.\smallskip

Let us assume that $H_1$ is infinite-dimensional. We define a vector measure $\mu: \mathcal{P}roj(B(H_1)) \to B(H_2)$ given by $\mu (0) =0$ and $\mu (p) = \Delta (p)$ for all $p$ in $\mathcal{P}roj(B(H_1))^*$. It is clear that $\mu (p) \in \mathcal{P}roj(B(H_2))$ for every $p$ in $\mathcal{P}roj(B(H_1))$. In particular \begin{equation}\label{eq measure mu is bounded} \{ \|\mu (p)\| : p\in\mathcal{P}roj(B(H_1)) \} =\{0,1\}.
\end{equation}

We claim that $\mu$ is finitely additive, that is \begin{equation}\label{eq mu is finitely additive} \mu \left( \sum_{j=1}^m p_j \right) = \sum_{j=1}^m \mu (p_j), \end{equation} for every family $\{p_1,\ldots,p_m\}$ of mutually orthogonal projections in $B(H_1)$. Namely, we can assume that $p_j\neq 0$ for every $j$. Lemma \ref{l orthomorphism preserves orthogonality} and Proposition \ref{p first consequence bis} assure that $\{\Delta(p_1),\ldots,\Delta(p_m)\}$ are mutually orthogonal projections in $B(H_2)$. We also know from Proposition \ref{p first consequence bis} that $\displaystyle \mu \left( \sum_{j=1}^m p_j \right) = \Delta \left( \sum_{j=1}^m p_j \right)$ and $\mu (p_j)= \Delta (p_j)$ are projections in $B(H_2)$ with $\displaystyle\mu \left( \sum_{j=1}^m p_j \right) = \Delta \left( \sum_{j=1}^m p_j \right)\geq \mu (p_j)= \Delta (p_j)$ for all $j\in \{1,\ldots, m\},$ and hence  $\displaystyle\mu \left( \sum_{j=1}^m p_j \right) \geq \sum_{j=1}^m \mu(p_j).$ Since $\displaystyle \sum_{j=1}^m \mu(p_j)$ and $\displaystyle \sum_{j=1}^m p_j$ are the least upper bounds of $\{\Delta(p_1),\ldots,\Delta(p_m)\}$ and $\{p_1,\ldots,p_m\}$ in $B(H_2)$ and $B(H_1)$, respectively, and $\Delta|_{\mathcal{P}roj(B(H_1))^*}$ is an order isomorphism (see Proposition \ref{p first consequence bis}), we get $\displaystyle\mu \left( \sum_{j=1}^m p_j \right) = \sum_{j=1}^m \mu(p_j).$\smallskip

We have therefore shown that $\mu$ is a bounded finitely additive measure. We are in position to apply the Bunce-Wright-Mackey-Gleason theorem (see \cite[Theorem A]{BuWri92} or \cite[Theorem A]{BuWri94}), and thus there exists a unique bounded complex linear operator $T : B(H_1)\to B(H_2)$ satisfying \begin{equation}\label{syntesis of T} \hbox{$T(p ) = \mu (p) = \Delta (p)$ for every $p\in \mathcal{P}roj(B(H_1))^*$.}
 \end{equation}Since $T|_{\mathcal{P}roj(B(H_1))^*} = \Delta|_{\mathcal{P}roj(B(H_1))^*} : \mathcal{P}roj(B(H_1))^*\to \mathcal{P}roj(B(H_2))^*$ is a surjective isometry and a unital order automorphism, the second part in Proposition \ref{p first consequence bis} implies that $T$ is a surjective isometry and a $^*$-isomorphism or a $^*$-anti-isomorphism.\smallskip

It only remains to prove that $T(x) = \Delta(x)$ for every $x\in S(B(H_1))$. Let us begin with an element of the form $\displaystyle a = p_0 + \sum_{j=1}^m \lambda_j p_j$, where $\lambda_j\in \mathbb{R}^+$, and $p_0,p_1,\dots, p_m$ are mutually orthogonal non-zero projections in $B(H_1)$ with $\displaystyle \sum_{j=0}^m  p_j=\textbf{1}.$\smallskip

Since $\Delta(\textbf{1}) =\textbf{1}$, Lemma \ref{l positive invertible are preserved} assures that $\Delta (S(\hbox{Inv} (B(H_1))^+)) = S(\hbox{Inv} (B(H_2))^+)$. Furthermore, since the sets $Sph(a; \hbox{co-min-}\mathcal{P}roj(B(H_1)))$ and $$Sph(Sph(a; \hbox{co-min-}\mathcal{P}roj(B(H_1))); S(\hbox{Inv} (B(H_1)^+))$$ are determined by the norm, the element $a$, the set $S(\hbox{Inv} (B(H_1))^+)$, and the set $Sph(a; \hbox{co-min-}\mathcal{P}roj(B(H_1))),$ and all these structures are preserved by $\Delta$, we deduce that
$$ \Delta(Sph(a; \hbox{co-min-}\mathcal{P}roj(B(H_1)))) = Sph(\Delta(a) ; \hbox{co-min-}\mathcal{P}roj(B(H_2))),$$ and \begin{equation}\label{eq Delta preserves double sph comin} \Delta\left( Sph(Sph(a; \hbox{co-min-}\mathcal{P}roj(B(H_1))); S(\hbox{Inv} (B(H_1)^+)) \right)
 \end{equation} $$= Sph(Sph(\Delta(a); \hbox{co-min-}\mathcal{P}roj(B(H_2))); S(\hbox{Inv} (B(H_2)^+)).$$

Lemma \ref{l delta preserves supports} implies that $s_{_{B(H_2)}} \left(\Delta(a)\right) = \Delta(p_0)$. We have already commented that $\Delta (a)$ is invertible (compare Lemma \ref{l positive invertible are preserved}).\smallskip

Now applying Theorem \ref{t main technical theorem with bispherical for co-minimal}$(b)$ we deduce that $$Sph(Sph(a; \hbox{co-min-}\mathcal{P}roj(B(H_1))); S(\hbox{Inv} (B(H_1)^+))$$ $$=\{ x\in S(\hbox{Inv} (B(H_1))^+) :  s_{_{B(H)}} (a)=p_0 \leq x \} $$ $$= p_0 + \{y \in (\textbf{1}-p_0) B(H_1)^+ (\textbf{1}-p_0) : y\in \hbox{Inv}((\textbf{1}-p_0) B(H_1) (\textbf{1}-p_0)), \ \|y\|\leq 1 \} $$ $$= p_0 + \mathcal{B}_{_{Inv((\textbf{1}-p_0) B(H_1)^+ (\textbf{1}-p_0))}} = p_0 + \mathcal{B}_{_{Inv( B((\textbf{1}-p_0)(H_1))^+)}},$$ and $$ Sph(Sph(\Delta(a); \hbox{co-min-}\mathcal{P}roj(B(H_2))); S(\hbox{Inv} (B(H_2)^+)) $$ $$ = \Delta(p_0) + \mathcal{B}_{_{Inv( B((\textbf{1}-\Delta(p_0))(H_2))^+)}}.$$ To simplify the notation, let us denote $K_1 = (\textbf{1}-p_0)(H_1)$ and $K_2 = (\textbf{1}-\Delta(p_0))(H_2)$. By combining the above identities with \eqref{eq Delta preserves double sph comin} we can consider the following diagram of surjective isometries:
\begin{equation}\label{diagram} \begin{tikzcd} p_0 + \mathcal{B}_{_{Inv( B(K_1)^+)}} \arrow[swap]{d}{\tau_{-p_0}} \arrow{r}{\Delta} &  \Delta(p_0) + \mathcal{B}_{_{Inv( B(K_2)^+)}}  \\
  \mathcal{B}_{_{Inv( B(K_1)^+)}} \arrow[dashrightarrow]{r}{\Delta_a} &  \mathcal{B}_{_{Inv( B(K_2)^+)}} \arrow[u, "\tau_{\Delta(p_0)}"]
\end{tikzcd}
 \end{equation} where, $\tau_{z}$ denotes the translation by $z$, and $\Delta_a$ is the surjective isometry making the above diagram commutative.\smallskip

Let us observe the following property: for each unital C$^*$-algebra $A$, the set $\mathcal{B}_{_{Inv( A^+)}},$ of all positive invertible elements in the closed unit ball of $A,$ is a convex subset with non-empty interior in $A_{sa}$. Actually, if $a,b\in \mathcal{B}_{_{Inv( A^+)}}$ we know that $t a +(1-t) b \in \mathcal{B}_{_{A^+}}$ for every $t\in [0,1]$ (see \cite[Theorem 1.4.2]{S}). By the invertibility of $a,b$ we can find positive constants $m_1,m_2$ such that $m_1 \textbf{1}\leq a$ and $m_2 \textbf{1} \leq b$. Therefore, $(t m_1 + (1-t) m_2) \textbf{1} \leq  t a +(1-t) b,$ which guarantees that $t a +(1-t) b$ is invertible too. We note that the open unit ball in $A_{sa}$ with center $\frac12 \textbf{1}$ and radius $\frac12$ is contained in $\mathcal{B}_{_{Inv( A^+)}}$. Since $\Delta_a : \mathcal{B}_{_{Inv( B(K_1)^+)}}\to \mathcal{B}_{_{Inv( B(K_2)^+)}}$ is a surjective isometry, we are in position to apply Manckiewicz´s theorem (see \cite[Theorem 5 and Remark 7]{Mank1972}) to deduce the existence of a surjective real linear isometry $T_a : B(K_1)_{sa}\to B(K_2)_{sa}$ and $z_0\in B(K_2)_{sa}$ such that \begin{equation}\label{eq identity with Delta after Mank thm} \hbox{$\Delta_a (x) = T_a (x)+z_0,$ for all $x \in \mathcal{B}_{_{Inv( B(K_1)^+)}}.$}
\end{equation}

Since $\Delta (\textbf{1}) = \textbf{1}$, it follows from the construction above that $\Delta_{a} (\textbf{1}_{_{B(K_1)}}) = \textbf{1}_{_{B(K_2)}},$ and thus  $T_{a} (\textbf{1}_{_{B(K_1)}}) +z_0 = \textbf{1}_{_{B(K_2)}}.$ \smallskip

Let us recall that an element $s$ in $B(K_2)_{sa}$ is called a symmetry if $s^2 =1$. Actually every symmetry in $B(K_2)_{sa}$ is of the form $s = p_1 - (\textbf{1}_{_{B(K_2)}}-p_1)$, where $p_1$ is a projection. The real Jordan Banach algebras $B(K_1)$ and $B(K_2)$ (equipped with the natural Jordan product $x\circ y = \frac12 (x y + yx)$) are prototypes of JB-algebras in the sense employed in \cite{WriYoung1978} and \cite{IsRod1995}. Since $T_a : B(K_1)_{sa}\to B(K_2)_{sa}$ is a surjective isometry, by applying \cite[Theorem 1.4]{IsRod1995}, we deduce the existence of a central symmetry $s\in B(K_2)_{sa}$, and a unital Jordan $^*$-isomorphism $\Phi_a : B(K_1)_{sa}\to B(K_2)_{sa}$ such that $T_a (x) = s \Phi_a (x)$, for all $x\in B(K_1)_{sa}$. However, the unique central symmetries in $B(K_2)_{sa}$ are $\textbf{1}_{_{B(K_2)}}$ and $-\textbf{1}_{_{B(K_2)}}$. Summing up we have $$ \textbf{1}_{_{B(K_2)}} -z_0 =T_a (\textbf{1}_{_{B(K_1)}}) = s \textbf{1}_{_{B(K_2)}} =s = \pm  \textbf{1}_{_{B(K_2)}}.$$ Then, one and only one of the next statements holds:
\begin{enumerate}[$(1)$] \item $z_0 = 0,$ and thus $T_{a} (\textbf{1}_{_{B(K_1)}}) = \textbf{1}_{_{B(K_2)}},$ and $T_a$ is a Jordan $^*$-isomorphism;
\item $z_0 = 2 \ \textbf{1}_{_{B(K_2)}}= 2 (\textbf{1}-\Delta(p_0)),$ and thus $T_{a} (\textbf{1}_{_{B(K_1)}}) = - \textbf{1}_{_{B(K_2)}},$ and $\Phi_a =- T_a$ is a Jordan $^*$-isomorphism;
\end{enumerate}

We claim that case $(2)$ is impossible, otherwise, by inserting the element $p_0 + \frac12 (\textbf{1}-p_0)$ (where $\frac12 \textbf{1}_{_{B(K_1)}} \equiv \frac12 (\textbf{1}-p_0) \in \mathcal{B}_{_{Inv( B(K_1)^+)}}\cong \mathcal{B}_{_{Inv( B((\textbf{1}-p_0)(H_1))^+)}}$) in the diagram \eqref{diagram} (see also \eqref{eq identity with Delta after Mank thm}) we get $$\Delta\left(p_0 + \frac12 (\textbf{1}-p_0)\right) = \Delta(p_0) + \Delta_a \left(\frac12 (\textbf{1}-p_0) \right) = \Delta(p_0) + T_a \left(\frac12 (\textbf{1}-p_0) \right) + z_0 $$ $$= \Delta(p_0)+ 2 \ (\textbf{1}- \Delta(p_0)) - \frac12 \Phi_a \left( (\textbf{1}-p_0) \right) = \Delta(p_0)+ 2 \ (\textbf{1}- \Delta(p_0)) - \frac12 \ (\textbf{1}- \Delta(p_0)) $$ $$= \Delta(p_0)+ \frac32 \ (\textbf{1}- \Delta(p_0)),$$ which proves that $\frac32 = \| \Delta(p_0)+ \frac32 \ (\textbf{1}- \Delta(p_0))\|= \|\Delta(p_0 + \frac12 (\textbf{1}-p_0))\| =1$, leading to a contradiction.\smallskip

Therefore, only case $(1)$ holds, and hence $T_a$ is a Jordan $^*$-isomorphism.\smallskip

We shall prove next that \begin{equation}\label{eq Ta and Delta coincide on projections in the orthogonal} \Delta (q) = T_a (q), \hbox{ for every projection } q \leq \textbf{1}-p_0.
\end{equation} Namely, take a projection $q \leq \textbf{1}-p_0.$ By inserting the element $b= p_0 + q +\frac12 (\textbf{1} -q-p_0)$ in the diagram \eqref{diagram} (see also \eqref{eq identity with Delta after Mank thm}) we get $$\Delta (b) = \Delta\left(  p_0 + q +\frac12 (\textbf{1} -q-p_0) \right) =\Delta (p_0) + \Delta_a \left( q +\frac12 (\textbf{1} -q-p_0) \right)  $$ $$ =\Delta (p_0) + T_a \left( q +\frac12 (\textbf{1} -q-p_0) \right)=\Delta (p_0) + T_a \left(q \right) + \frac12 T_a(\textbf{1} -q-p_0),$$ which assures that  $s_{_{B(H_2)}} (\Delta (b) ) = \Delta (p_0) + T_a \left(q \right)$. On the other hand, Lemma \ref{l delta preserves supports} implies that $s_{_{B(H_2)}} (\Delta (b) ) =  \Delta (s_{_{B(H_2)}} (b) ) = \Delta (p_0 +q ) = \hbox{(by \eqref{eq mu is finitely additive})} = \Delta (p_0) + \Delta (q)$. We have therefore shown that $ \Delta (p_0) + T_a \left(q \right) = \Delta (p_0) + \Delta (q),$ which concludes the proof of \eqref{eq Ta and Delta coincide on projections in the orthogonal}.\smallskip

Now, inserting our element $\displaystyle a = p_0 + \sum_{j=1}^m \lambda_j p_j$ (where $\lambda_j\in \mathbb{R}^+$, and $p_0,p_1,\dots, p_m$ are mutually orthogonal non-zero projections in $B(H_1)$ with $\displaystyle \sum_{j=0}^m  p_j=\textbf{1}$) in \eqref{diagram} (see also \eqref{eq identity with Delta after Mank thm}) we deduce that $$\Delta(a) = \Delta\left(  p_0 + \sum_{j=1}^m \lambda_j p_j \right) =\Delta (p_0) + \Delta_a \left( \sum_{j=1}^m \lambda_j p_j \right) =\Delta (p_0) + T_a \left( \sum_{j=1}^m \lambda_j p_j \right)  $$ $$=\Delta (p_0) +  \sum_{j=1}^m \lambda_j T_a \left(p_j \right) =\hbox{(by \eqref{eq Ta and Delta coincide on projections in the orthogonal})} =\Delta (p_0) +  \sum_{j=1}^m \lambda_j \Delta \left(p_j \right) $$ $$= \hbox{(by \eqref{syntesis of T})} =T (p_0) +  \sum_{j=1}^m \lambda_j T \left(p_j \right) = T(a).$$

Finally it is well known that every element in the unit sphere of $B(H_1)$ can be approximated in norm by elements of the form $\displaystyle a = p_0 + \sum_{j=1}^m \lambda_j p_j,$ where $\lambda_j\in \mathbb{R}^+$, and $p_0,p_1,\dots, p_m$ are mutually orthogonal non-zero projections in $B(H_1)$ with $\displaystyle \sum_{j=0}^m  p_j=\textbf{1}$. Therefore, since $\Delta$ and $T$ are continuous and coincide on elements of the previous form, we deduce that $\Delta (x) = T(x)$, for every $x\in S(B(H_1)^+)$, which concludes the proof.
\end{proof}

\section{Surjective isometries between normalized positive elements of compact operators}\label{sec:K(H)}

Throughout this section $H_3$ and $H_4$ will denote two separable infinite-dimen-sional complex Hilbert spaces. Our goal here will consist in studying surjective isometries $\Delta :S( K( H_{3} )^{+})\to S( K( H_{4} )^{+} ).$\smallskip

We begin with a technical result.

\begin{lemma}\label{l surjective isometries between balls of positive} Let $\Delta : \mathcal{B}_{B(H_1)^+}\to \mathcal{B}_{B(H_2)^+}$ be a surjective isometry, where $H_1$ and $H_2$ are complex Hilbert spaces. Suppose that $\Delta(\mathcal{P}roj(B(H_1))) = \mathcal{P}roj(B(H_2))$. Then there exists a surjective complex linear isometry {\rm(}actually a Jordan $^*$-iso-morphism{\rm)} $T : B(H_1)\to B(H_2)$ such that one of the next statements holds:
\begin{enumerate}[$(a)$] \item $\Delta(x) = T(x),$ for all $x\in \mathcal{B}_{B(H_1)^+}$;
\item $\Delta(x) = \textbf{1}-T(x),$ for all $x\in \mathcal{B}_{B(H_1)^+}$.
\end{enumerate}
Furthermore, since $B(H_1)$ and $B(H_2)$ are factors we can also deduce that $T$ is a $^*$-isomorphism or a $^*$-anti-isomorphism.
\end{lemma}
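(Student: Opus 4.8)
The plan is to exploit that $\mathcal{B}_{B(H_1)^+}$ and $\mathcal{B}_{B(H_2)^+}$ are convex bodies in the real Banach spaces $B(H_1)_{sa}$ and $B(H_2)_{sa}$. As already observed in the proof of Theorem \ref{t positive Tigley for B(H)}, each set $\mathcal{B}_{B(H_i)^+}$ is convex and contains the open ball of centre $\frac12\textbf{1}$ and radius $\frac12$, so it has non-empty interior. First I would feed the surjective isometry $\Delta$ into Mankiewicz's theorem (\cite[Theorem 5 and Remark 7]{Mank1972}) to produce a surjective real linear isometry $S:B(H_1)_{sa}\to B(H_2)_{sa}$ and an element $z_0\in B(H_2)_{sa}$ with $\Delta(x)=S(x)+z_0$ for every $x\in\mathcal{B}_{B(H_1)^+}$.

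The next step is to locate the images of the trivial projections $0$ and $\textbf{1}$, since these will pin down $z_0$ and $S(\textbf{1})$. Because $\Delta$ preserves projections and distances and every non-zero projection of $B(H_2)$ has the form $\Delta(p)$ for some projection $p$, one gets $\|\Delta(0)-\Delta(p)\|=\|p\|\in\{0,1\}$, so $\Delta(0)$ is a projection whose distance to every other projection equals $1$; the same holds for $\Delta(\textbf{1})$. On the other hand, the family $q_t$ recalled at the beginning of Section \ref{sec:B(H)} shows that when $\dim(H_2)\geq 2$ the only projections enjoying this metric property are $0$ and $\textbf{1}$, since every non-trivial projection admits a projection at distance strictly between $0$ and $1$. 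As $\Delta$ is injective this forces $\{\Delta(0),\Delta(\textbf{1})\}=\{0,\textbf{1}\}$, leaving exactly two cases (the one-dimensional case reduces to a surjective isometry of $[0,1]$, i.e. the identity or $t\mapsto 1-t$, and is handled directly). Evaluating at $0$ gives $z_0=\Delta(0)$.

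In the case $\Delta(0)=0,\ \Delta(\textbf{1})=\textbf{1}$ we have $z_0=0$, $\Delta=S$ on the ball and $S(\textbf{1})=\textbf{1}$; in the case $\Delta(0)=\textbf{1},\ \Delta(\textbf{1})=0$ we have $z_0=\textbf{1}$ and $S(\textbf{1})=-\textbf{1}$, so $-S$ is unital. Either way I am left with a unital surjective real linear isometry $R\in\{S,-S\}$ between the JB-algebras $B(H_1)_{sa}$ and $B(H_2)_{sa}$, and \cite[Theorem 1.4]{IsRod1995} (exactly as used in Theorem \ref{t positive Tigley for B(H)}) writes $R=s\,\Phi$ with $s$ a central symmetry and $\Phi$ a unital Jordan $^*$-isomorphism; unitality of $R$ forces $s=\textbf{1}$, so $R=\Phi$. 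Complexifying via $T(a+ib)=\Phi(a)+i\Phi(b)$ for $a,b\in B(H_1)_{sa}$ yields a surjective complex linear isometry and Jordan $^*$-isomorphism agreeing with $R$ on positive elements, giving $\Delta(x)=T(x)$ in the first case and $\Delta(x)=\textbf{1}-T(x)$ in the second, i.e. $(a)$ and $(b)$. Finally, since $B(H_1)$ is a factor, \cite[Corollary 11]{Kad51} (as invoked in Proposition \ref{p first consequence bis}) shows that the Jordan $^*$-isomorphism $T$ is a $^*$-isomorphism or a $^*$-anti-isomorphism.

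I expect the delicate point to be the combination of Mankiewicz's theorem with the correct identification of $z_0$: one must keep in mind that $\Delta$ is assumed isometric only on the ball, so its affine form is genuinely supplied by \cite{Mank1972}, and the projection-preserving hypothesis is the sole extra input available both to rule out a nonzero translation part and to force $S(\textbf{1})=\pm\textbf{1}$. Once these are secured, the passage to the Jordan structure is routine given \cite[Theorem 1.4]{IsRod1995} and \cite[Corollary 11]{Kad51}.
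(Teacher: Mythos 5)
Your proposal is correct and follows essentially the same route as the paper's proof: Mankiewicz's theorem on the convex body $\mathcal{B}_{B(H_i)^+}$, identification of $z_0=\Delta(0)$ and $\Delta(\textbf{1})$ with $\{0,\textbf{1}\}$ via the metric characterization of trivial projections, and then the unital JB-algebra isometry theorem plus \cite[Corollary 11]{Kad51}. The only cosmetic difference is that you invoke \cite[Theorem 1.4]{IsRod1995} where the paper cites \cite[Theorem 4]{WriYoung1978} at this step; both yield the same Jordan $^*$-isomorphism conclusion.
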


\begin{proof} We consider the real Banach spaces $B(H_1)_{sa}$ and $B(H_2)_{sa}$ as JB-algebras in the sense employed in \cite{WriYoung1978}. The proof is heavily based on a deep result due to P. Mankiewicz asserting that every bijective isometry between convex sets in normed linear spaces with nonempty interiors, admits a unique extension to a bijective affine isometry between the corresponding spaces (see \cite[Theorem 5 and Remark 7]{Mank1972}). Let us observe that $\mathcal{B}_{B(H_1)^+}\subset \mathcal{B}_{B(H_1)_{sa}}$ and $\mathcal{B}_{B(H_2)^+}\subset \mathcal{B}_{B(H_2)_{sa}}$ are convex sets with nonempty interiors (just observe that the open unit ball in $B(H)_{sa}$ of radius $1/2$ and center $\frac12 \textbf{1}$, is contained in $\mathcal{B}_{B(H)^+}$). Thus, by Mankiewicz's theorem, there exists a bijective real linear isometry $T: B(H_1)_{sa}\to B(H_2)_{sa}$ and $z_0\in \mathcal{B}_{B(H_2)^+}$ such that $\Delta(x) = T(x) +z_0$, for all $x\in \mathcal{B}_{B(H_1)^+}$. We denote by the same symbol $T$ the bounded complex linear operator from $ B(H_1)$ to $B(H_2)$ given by $T(x+i y ) = T(x) + i T(y)$ for all $x,y \in B(H_1)_{sa}$.\smallskip

On the other hand, since, by hypothesis, $\Delta$ preserves projections, we infer that $z_0$ is a projection and $T(\mathcal{P}roj(B(H_1)))+ z_0= \Delta(\mathcal{P}roj(B(H_1))) = \mathcal{P}roj(B(H_2)).$ The projections $0$ and $\textbf{1}$ are the unique projections in $B(H_1)$ (or in $B(H_2)$) whose distance to another projection is $0$ or $1$. If $z_0=\Delta(0)\neq 0,\textbf{1}$, then there exists a non-trivial projection $q$ in $B(H_2)$ satisfying $0< \| \Delta (0)-q \| <1$. This implies that $$\{0,1\}\ni \| 0- \Delta^{-1} (q) \| = \| \Delta(0) - q \| \in (0,1),$$ which is impossible. We have therefore proved that $z_0= \Delta (0) \in\{ 0,\textbf{1}\}.$ Similar arguments show that $\Delta(\textbf{1}) = T(\textbf{1})+z_0\in \{0,\textbf{1}\}$. Applying that $\Delta$ is a bijection we deduce that precisely one of the next statements holds:
\begin{enumerate}[$(a)$] \item $\Delta(0)=z_0=0$ and $\Delta(\textbf{1})=\textbf{1}$;
\item $\Delta(0)=z_0=\textbf{1}$ and $\Delta(\textbf{1})=0$.
\end{enumerate}

If $z_0 = \Delta (0) = 0,$ and $\Delta(\textbf{1}) = T(\textbf{1}) +z_0=\textbf{1}$,  the mapping $T: B(H_1)_{sa} \to B(H_2)_{sa}$ is a unital and surjective real linear isometry between JB-algebras. Applying \cite[Theorem 4]{WriYoung1978}, we deduce that $T$ is a Jordan isomorphism. In particular, the complex linear extension $T: B(H_1)\to B(H_2)$ is a complex linear Jordan $^*$-isomorphism and $\Delta(x)= T(x),$ for all $x\in \mathcal{B}_{B(H_1)^+}$. We arrive to statement $(a)$ in our conclusion.\smallskip

If $\Delta(0)=z_0=\textbf{1}$ and $\Delta(\textbf{1})=T(\textbf{1}) +z_0=0$, we have $T(\textbf{1}) = -\textbf{1}$. Therefore $-T: B(H_1)_{sa} \to B(H_2)_{sa}$ is a unital and surjective real linear isometry. The arguments in the previous case prove that the complex linear extension of $-T$, denoted by $-T: B(H_1)\to B(H_2),$ is a complex linear Jordan $^*$-isomorphism and $\Delta(x)= \textbf{1}-(-T(x)),$ for all $x\in \mathcal{B}_{B(H_1)^+}$. We have therefore arrived to statement $(b)$ in our conclusion.\smallskip

The last statement follows from Corollary 11 in \cite{Kad51}.
\end{proof}

Corollary \ref{c first consequence K(H)} admits an strengthened version which was established in \cite{Pe2018b}.

\begin{theorem}\label{t bi spherical set K(H)}{\rm\cite[Theorem 2.8]{Pe2018b}} Let $H_2$ be a separable infinite-dimensional complex Hilbert space. Then the identity $$Sph^+_{K(H_2)} \left( Sph^+_{K(H_2)}(a) \right) =\left\{ b\in S(K(H_2)^+) : \!\! \begin{array}{c}
    s_{_{K(H_2)}} (a) \leq s_{_{K(H_2)}} (b),  \hbox{ and }\\
     \textbf{1}-r_{_{B(H_2)}}(a)\leq \textbf{1}-r_{_{B(H_2)}}(b)
  \end{array}\!\!
 \right\},$$ holds for every $a$ in the unit sphere of $K(H_2)^+$. $\hfill\Box$
\end{theorem}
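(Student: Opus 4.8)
The plan is to reduce the whole statement to the distance-one characterization recorded in \cite[Lemma 2.1]{Pe2018b}, the same tool exploited throughout Section \ref{sec:B(H)}. For positive norm-one contractions $u,w$ one always has $-w\le u-w\le u$, whence $\|u-w\|\le 1$; so ``$\|u-w\|=1$'' is the condition of \emph{maximal} distance. First I would translate \cite[Lemma 2.1]{Pe2018b} into the compact setting: for a rank-one (minimal) projection $e$ in $B(H_2)=K(H_2)^{**}$ one checks $e\le u \Leftrightarrow e\le s(u)$ and $e\perp w \Leftrightarrow e\le \textbf{1}-r(w)$, where I abbreviate $s(\cdot)=s_{_{K(H_2)}}(\cdot)$ and $r(\cdot)=r_{_{B(H_2)}}(\cdot)$. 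Since every positive norm-one compact operator attains its norm, $s(u)$ is a non-zero finite-rank projection (the eigenspace for the eigenvalue $1$), while $r(u)$ is the projection onto $\overline{u(H_2)}=(\ker u)^{\perp}$ and need not be compact. The lemma then reads
\begin{equation*}
\|u-w\|=1 \ \Longleftrightarrow\ s(u)\wedge(\textbf{1}-r(w))\neq 0 \ \hbox{ or } \ s(w)\wedge(\textbf{1}-r(u))\neq 0, \tag{$\star$}
\end{equation*}
where $p\wedge q$ denotes the projection onto $p(H_2)\cap q(H_2)$.

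Next I would prove the inclusion ``$\supseteq$''. Assume $s(a)\le s(b)$ and $r(b)\le r(a)$, equivalently $\textbf{1}-r(a)\le \textbf{1}-r(b)$, and take any $x\in Sph^+_{K(H_2)}(a)$. Applying $(\star)$ to $\|x-a\|=1$, either $s(x)\wedge(\textbf{1}-r(a))\neq 0$ or $s(a)\wedge(\textbf{1}-r(x))\neq 0$. In the first case monotonicity of the meet together with $\textbf{1}-r(a)\le \textbf{1}-r(b)$ gives $0\neq s(x)\wedge(\textbf{1}-r(a))\le s(x)\wedge(\textbf{1}-r(b))$; in the second, $s(a)\le s(b)$ gives $0\neq s(a)\wedge(\textbf{1}-r(x))\le s(b)\wedge(\textbf{1}-r(x))$. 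Either way $(\star)$ yields $\|x-b\|=1$, so $b$ lies in $Sph^+_{K(H_2)}(Sph^+_{K(H_2)}(a))$.

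The substantial part is the reverse inclusion, which I would prove by contraposition: if $s(a)\not\le s(b)$ or $r(b)\not\le r(a)$, I must produce a single $x\in Sph^+_{K(H_2)}(a)$ with $\|x-b\|<1$, that is, by $(\star)$, with $s(x)\wedge(\textbf{1}-r(b))=0$ and $s(b)\wedge(\textbf{1}-r(x))=0$. If $s(a)\not\le s(b)$, I would fix a unit vector $\xi\in s(a)(H_2)$ with $\xi\notin s(b)(H_2)$ and set $x=\zeta\otimes\zeta+\sum_{n}\mu_n\,\psi_n\otimes\psi_n$, where $\{\zeta,\psi_n\}$ is an orthonormal basis of $\xi^{\perp}$ and $0<\mu_n<1$, $\mu_n\to 0$, and where $\zeta$ is chosen with $\zeta\notin\ker b$; such $\zeta$ exists because $b$ cannot vanish on all of $\xi^{\perp}$ (otherwise $b=\xi\otimes\xi$ and $\xi\in s(b)(H_2)$) and because $\xi^{\perp}$ is infinite-dimensional. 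Then $\ker x=\mathbb{C}\xi$ and $s(x)=\mathbb{C}\zeta$, so $x\xi=0$ forces $s(a)\wedge(\textbf{1}-r(x))\neq0$ and $\|x-a\|=1$, while $s(x)\wedge(\textbf{1}-r(b))$ projects onto $\mathbb{C}\zeta\cap\ker b=\{0\}$ and $s(b)\wedge(\textbf{1}-r(x))$ onto $s(b)(H_2)\cap\mathbb{C}\xi=\{0\}$, giving $\|x-b\|<1$. If instead $r(b)\not\le r(a)$, then $\ker a\not\subseteq\ker b$, so I would pick a unit vector $\eta\in\ker a$ with $b\eta\neq0$ and take $x=\eta\otimes\eta+\sum_n\mu_n\,\psi_n\otimes\psi_n$ with $\{\eta,\psi_n\}$ an orthonormal basis of $H_2$; here $\ker x=0$, the peak projection $s(x)=\eta\otimes\eta$ lies under $\textbf{1}-r(a)$ because $\eta\in\ker a$, so $\|x-a\|=1$, while $s(x)\wedge(\textbf{1}-r(b))$ projects onto $\mathbb{C}\eta\cap\ker b=\{0\}$ and $\textbf{1}-r(x)=0$, so again $\|x-b\|<1$.

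I expect the witness constructions of the last paragraph to be the genuine obstacle, and they are precisely where separable infinite-dimensionality is indispensable. The delicate geometric point is that $r(b)\not\le r(a)$ (resp. $s(a)\not\le s(b)$) does \emph{not} supply a common vector of $r(b)$ and $\textbf{1}-r(a)$, so one cannot simply arrange $r(x)=r(b)$; the remedy is to plant the peak eigenvector of $x$ inside $\ker a\setminus\ker b$ (resp. to annihilate a vector of $s(a)(H_2)\setminus s(b)(H_2)$) and to exploit the infinite-dimensional complement to keep $s(x)$ and $\ker x$ clear of $\ker b$ and of $s(b)$, after discarding the degenerate rank-one case. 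This also clarifies why the identity is confined to infinite dimensions: in a finite-dimensional space $Sph^+_{K(H_2)}(a)$ may collapse to a single projection, and then the double sphere is strictly larger than the right-hand side.
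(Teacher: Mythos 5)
The paper never proves this statement: it is imported verbatim from \cite[Theorem 2.8]{Pe2018b} and closed with a box, so there is no internal argument to compare yours against. Judged on its own, your proof is correct and essentially self-contained modulo \cite[Lemma 2.1]{Pe2018b}, which is the same black box the paper uses throughout Section 3. Your reformulation $(\star)$ is a faithful specialization of that lemma to $K(H_2)^{**}=B(H_2)$, where minimal projections are rank-one and $e\le u$, $e\perp w$ translate into $e\le s(u)$, $e\le \textbf{1}-r(w)$; for compact operators it can even be verified directly, since $\|u-w\|=1$ forces $\pm1$ to be an eigenvalue of $u-w$ and a corresponding unit eigenvector $\phi$ satisfies $u\phi=\phi$ and $w\phi=0$ (or the roles reversed). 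Granting $(\star)$, the inclusion $\supseteq$ is exactly the monotonicity of the meet, and both witness constructions in the contrapositive of $\subseteq$ check out: in each case $x$ is a positive norm-one compact operator, the kernel line $\mathbb{C}\xi$ (resp.\ the peak line $\mathbb{C}\eta$) certifies $\|x-a\|=1$ via the second (resp.\ first) alternative of $(\star)$, and the choices $\zeta\notin\ker b$, $\xi\notin s(b)(H_2)$ (resp.\ $\eta\notin\ker b$, $\ker x=0$) annihilate both meets for the pair $(x,b)$, so $\|x-b\|<1$. The only point I would push back on is your closing heuristic: the identity does not visibly fail in finite dimensions (for $a=p$ a rank-one projection in $M_2$ both sides equal $\{p\}$), and your construction needs only one orthonormal vector $\zeta$ or $\eta$ beyond the distinguished line rather than infinitely many; the separable infinite-dimensionality is better read as the hypothesis under which \cite[Theorem 2.8]{Pe2018b} is stated than as something the argument genuinely exploits. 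This aside does not affect the validity of the proof.
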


We can now improve the conclusion of Corollary \ref{c first consequence K(H)}.

\begin{proposition}\label{p second consequence K(H)} Let $H_3$ and $H_4$ be separable complex Hilbert spaces. Let us assume that $H_3$ is infinite-dimensional.
Let $\Delta : S(K(H_3)^+)\to S(K(H_4)^+)$ be a surjective isometry. Then the following statements hold:
\begin{enumerate}[$(a)$]\item $\Delta$ preserves projections, that is, $\Delta (\mathcal{P}roj(K(H_3))^*) = \mathcal{P}roj(K(H_4))^*$, and the restricted mapping $\Delta|_{\mathcal{P}roj(K(H_3))^*} : \mathcal{P}roj(K(H_3))^*\to \mathcal{P}roj(K(H_4))^*$ is a surjective isometry and an order automorphism. Furthermore, $\Delta (p) \Delta (q)=0$ for every $p,q\in \mathcal{P}roj(K(H_3))^*$ with $p q =0$;
\item For every finite family $p_1,\ldots, p_n$ of mutually orthogonal minimal projections in $K(H_3)$, and $1=\lambda_1\geq \lambda_2,\ldots, \lambda_n\geq 0$ we have $$\Delta\left(\sum_{j=1}^{n} \lambda_j p_j \right)= \sum_{j=1}^{n} \lambda_j \Delta\left( p_j \right).$$
\end{enumerate}
\end{proposition}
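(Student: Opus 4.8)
The plan is to prove Proposition \ref{p second consequence K(H)} in two stages, mirroring the structure of the analogous $B(H)$ results but substituting the compact-operator tools supplied earlier.

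For part $(a)$, the projection-preservation and the isometric order-automorphism statement are immediate: Corollary \ref{c first consequence K(H)} already gives that $\Delta$ maps $\mathcal{P}roj(K(H_3))^*$ onto $\mathcal{P}roj(K(H_4))^*$ as a surjective isometry, and the preservation of the order is obtained by repeating \emph{verbatim} the argument in Proposition \ref{p first consequence bis}. The one genuine difference is that $K(H_3)$ is nonunital, so the unit $\textbf{1}$ is not available inside the algebra and there is no largest projection to normalize against; consequently I cannot appeal to the ``$\Delta$ is unital'' reduction. Instead I would work directly with minimal projections: using that every nonzero projection in $K(H_3)$ is the least upper bound of the minimal projections below it, and that $\Delta$ restricts to an isometry on $\mathcal{P}roj(K(H_3))^*$ which carries minimal projections to minimal projections (the distance $1$ between two minimal projections being rigid, exactly as in the first paragraph of Lemma \ref{l orthomorphism preserves orthogonality}), I would establish $p\le q\iff\Delta(p)\le\Delta(q)$. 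The orthogonality-preservation $\Delta(p)\Delta(q)=0$ whenever $pq=0$ then follows from the second, $K(H)$-version, clause of Lemma \ref{l orthomorphism preserves orthogonality}, since $\Delta|_{\mathcal{P}roj(K(H_3))^*}$ is now known to be an isometric order automorphism.

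For part $(b)$, set $a=\sum_{j=1}^n\lambda_j p_j$ with $\lambda_1=1$, and write $s=s_{_{K(H_3)}}(a)=\sum_{\lambda_j=1}p_j$ and $r=r_{_{B(H_3)}}(a)=\sum_{j=1}^n p_j$ for the support and range projections. The idea is to pin down $\Delta(a)$ purely by the metrically defined bi-spherical sets. Theorem \ref{t bi spherical set K(H)} describes $Sph^+_{K}(Sph^+_{K}(a))$ entirely in terms of the support and range projections of $a$, and since $\Delta$ preserves the function $Sph^+_K(\cdot)$ (these sets being determined by the metric structure of $S(K^+)$, as noted before Corollary \ref{c first consequence}), $\Delta$ carries this bi-spherical set onto the corresponding set for $\Delta(a)$. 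Applying Theorem \ref{t bi spherical set K(H)} to both $a$ and $\Delta(a)$, and using part $(a)$ to identify $\Delta(s)$ and $\Delta(r)$ as the support and range projections of $\Delta(a)$, I obtain $s_{_{K(H_4)}}(\Delta(a))=\Delta(s)$ and $r_{_{B(H_4)}}(\Delta(a))=\Delta(r)$. Thus $\Delta(a)$ is a positive norm-one compact operator whose spectral projection at $1$ is $\Delta(s)$ and whose range projection is $\Delta(r)=\sum_{j=1}^n\Delta(p_j)$, with the $\Delta(p_j)$ mutually orthogonal minimal projections by part $(a)$.

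The main obstacle is the final step: knowing the top and bottom spectral data of $\Delta(a)$ is not yet enough to conclude that $\Delta(a)=\sum_j\lambda_j\Delta(p_j)$, because the intermediate eigenvalues $\lambda_2,\dots,\lambda_n$ and their pairing with the $\Delta(p_j)$ must be recovered. The plan here is to localize: for each fixed eigenvalue level, apply the bi-spherical machinery to suitable auxiliary elements built from $a$ (for instance, replacing $a$ by elements of the form $p_1+\mu\sum_{j\ge 2}p_j$ or by compressions that isolate one $\lambda_j$), so that a cut-off argument on the spectrum forces $s_{_{K(H_4)}}(\Delta(\cdot))$ to track each spectral projection $\sum_{\lambda_j\ge t}p_j$ individually. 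Iterating over the finitely many distinct values of the $\lambda_j$, from $\lambda=1$ downward, determines the whole spectral decomposition of $\Delta(a)$ and matches each $\Delta(p_j)$ to its eigenvalue $\lambda_j$, which yields $\Delta(a)=\sum_{j=1}^n\lambda_j\Delta(p_j)$. Care must be taken that each auxiliary element still lies in $S(K(H_3)^+)$ and that its support and range projections are the ones the argument needs, but the metric-invariance of the $Sph^+_K$ construction makes each such identification routine once the correct element is chosen.
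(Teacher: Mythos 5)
Your reduction of part $(a)$ to ``repeat the argument of Proposition \ref{p first consequence bis}'' does not survive the passage to $K(H_3)$, and your fallback is exactly where the real content lies. The order-preservation argument in Proposition \ref{p first consequence bis} hinges on the unit: one tests against invertible elements such as $v+\frac12(\textbf{1}-v)$ and uses $\Delta(\textbf{1})=\textbf{1}$. In $K(H_3)$ with $H_3$ infinite-dimensional there are no positive invertible elements at all, so none of this transfers. Your replacement --- $\Delta$ sends minimal projections to minimal projections, every projection is the least upper bound of the minimal projections below it, hence $p\le q\Rightarrow\Delta(p)\le\Delta(q)$ --- is not a proof: for a minimal projection $v$ and a projection $p$ one has $\|v-p\|=1$ both when $v\le p$ (with $v\neq p$) and when $v\perp p$, so the metric data on projections alone cannot decide whether $\Delta(v)\le\Delta(p)$ or $\Delta(v)\perp\Delta(p)$. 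What the paper actually does is embed $p$ into $a=p+\sum_{j=1}^n\frac12 e_j$ with $n$ strictly larger than the rank of $\Delta(p)$, use Theorem \ref{t bi spherical set K(H)} to identify $Sph^+_{K(H_3)}(Sph^+_{K(H_3)}(a))=p+\mathcal{B}_{q_nK(H_3)^+q_n}$ as a metrically determined convex body, apply Lemma \ref{l surjective isometries between balls of positive} (i.e.\ Mankiewicz's theorem) to the induced isometry of balls, and exclude the affine-reflection case $(2)$ by a rank count. Only after the order-automorphism property is in hand does Lemma \ref{l orthomorphism preserves orthogonality} become applicable for the orthogonality claim. You cite the right bi-spherical theorem but never invoke Lemma \ref{l surjective isometries between balls of positive}, which is the engine.

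The same omission breaks part $(b)$. You correctly identify the obstacle --- knowing $s_{K(H_4)}(\Delta(a))$ and $r_{B(H_4)}(\Delta(a))$ does not determine $\Delta(a)$ --- but the proposed fix (auxiliary elements and a spectral cut-off recovering each intermediate eigenvalue level) is left entirely schematic, and it is essentially the hard spectral reconstruction problem that the paper's method is designed to avoid; nothing in your sketch forces $\Delta$ to act \emph{linearly} on the intermediate eigenvalues. The paper's route is again through affinity: for $a=p_1+\sum_{j\ge2}\frac12 p_j$ the double-sphere set is $p_1+\mathcal{B}_{q_{n-1}K(H_3)^+q_{n-1}}$, Lemma \ref{l surjective isometries between balls of positive} shows the induced map $\Delta_a$ is the restriction of a unital Jordan $^*$-isomorphism $T_a$ (the alternative $\textbf{1}-T_a$ again excluded by a rank argument), one checks $T_a(p_j)=\Delta(p_j)$ using minimality and the orthogonality from part $(a)$, and then $\sum_j\lambda_jp_j$ lies in that same double-sphere set, so the linearity of $T_a$ yields the formula immediately. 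Supplying Lemma \ref{l surjective isometries between balls of positive} at both places is the missing idea; without it neither the order automorphism in $(a)$ nor the additivity in $(b)$ is established.
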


\begin{proof} $(a)$ The first part of the statement has been proved in Corollary \ref{c first consequence K(H)}. We shall show next that $\Delta$ preserves order between non-zero projections.\smallskip

We claim that given $p,e_1\in \mathcal{P}roj(K(H_3))^*$ with $e_1$ minimal and $e_1\perp p$ we have \begin{equation}\label{eq projection and some minimal orthogonal} \Delta(p + e_1) \geq \Delta(p).
\end{equation}

To prove the claim, let $m_0\in \mathbb{N}$ denote the rank of the projection $\Delta(p)\in K(H_4)$. Since $H_3$ is infinite-dimensional, we can find a natural $n$ with $n>m_0$ and mutually orthogonal minimal projections $e_2,\ldots, e_{n}$ such that $p+e_1\perp e_j$ for all $j=2,\ldots,n$.\smallskip

We next apply Theorem \ref{t bi spherical set K(H)} to the element $\displaystyle a = p + \sum_{j=1}^{n} \frac12 e_j $. Let us write $\displaystyle q_{n}=\sum_{j=1}^{n} e_j$.
Clearly, $q_n$ is a projection in $K(H_3)$ with $q_n\perp p$, and since $r_{_{B(H_3)}}(a)= p + \displaystyle\sum_{j=1}^{n} e_j = p+ q_{n}$, we have $$Sph^+_{K(H_3)} \left( Sph^+_{K(H_3)}(a) \right) =\left\{ b\in S(K(H_3)^+) : \!\! \begin{array}{c}
    s_{_{K(H_3)}} (a) = p \leq s_{_{K(H_3)}} (b),  \hbox{ and }\\
      \textbf{1}-p - q_n \leq \textbf{1}-r_{_{B(H_3)}}(b)
  \end{array}\!\!
 \right\}, $$ $$ =\left\{ b\in S(K(H_3)^+) : \!\! \begin{array}{c}
    s_{_{K(H_3)}} (a) = p \leq s_{_{K(H_3)}} (b),  \hbox{ and }\\
      b\leq p + q_n
  \end{array}\!\!
 \right\} $$ $$= p+\left\{ x\in \mathcal{B}_{_{K(H_3)^+}} :
  p \perp x \leq  q_n\right\} = p + \mathcal{B}_{_{q_n K(H_3)^+ q_n }}, $$
and the set $\mathcal{B}_{_{q_n K(H_3)^+ q_n }}$ can be C$^*$-isometrically identified with $\mathcal{B}_{B(\ell_2^{n})^+}$.\smallskip

Clearly, the restriction of $\Delta$, to $Sph^+_{K(H_3)}\left( Sph^+_{K(H_3)}(a) \right)$ is a surjective isometry from this set onto $Sph^+_{K(H_4)} \left( Sph^+_{K(H_4)}(\Delta(a)) \right)$. Similarly, by Theorem \ref{t bi spherical set K(H)}, we have $$Sph^+_{K(H_4)} \left( Sph^+_{K(H_4)}(\Delta(a)) \right) = s_{_{K(H_4)}} (\Delta(a)) + \mathcal{B}_{_{ \widehat{q} K(H_4)^+ \widehat{q} }},$$ where $\widehat{q} = r_{_{B(H_4)}} (\Delta(a)) - s_{_{K(H_4)}} (\Delta(a))\in B(H_4)$ and the set $\mathcal{B}_{_{ \widehat{q} K(H_4)^+ \widehat{q} }}$ can be C$^*$-isometrically identified with $\mathcal{B}_{B(H)^+}$, where $H = \widehat{q}(H_4)$ is a complex Hilbert space whose dimension coincides with the rank of the projection $\widehat{q}$. Since every translation, $x\mapsto \tau_z (x) = z+x,$ is a surjective isometry, we can define a surjective isometry $\Delta_a : \mathcal{B}_{B(\ell_2^{n})^+} \to \mathcal{B}_{B(H)^+}$ making the following diagram commutative
\[\begin{tikzcd} Sph^+_{K(H_3)}\left( Sph^+_{K(H_3)}(a) \right) \arrow{r}{\Delta} \arrow[transform canvas={xshift=0.3ex},-]{d} \arrow[transform canvas={xshift=-0.4ex},-]{d} &  Sph^+_{K(H_4)}\left( Sph^+_{K(H_4)}(\Delta(a)) \right) \arrow[transform canvas={xshift=0.3ex},-]{d} \arrow[transform canvas={xshift=-0.4ex},-]{d} \\
 p + \mathcal{B}_{_{q_n K(H_3)^+ q_n }}  \arrow[swap]{d}{\tau_{-p}} & s_{_{K(H_4)}} (\Delta(a)) + \mathcal{B}_{_{ \widehat{q} K(H_4)^+ \widehat{q} }} \\
  \mathcal{B}_{_{q_n K(H_3)^+ q_n }}\cong \mathcal{B}_{B(\ell_2^{n})^+} \arrow{r}{\Delta_a} &  \mathcal{B}_{_{ \widehat{q} K(H_4)^+ \widehat{q} }} \cong \mathcal{B}_{B(H)^+} \arrow[u, "\tau_{s_{_{K(H_4)}} (\Delta(a))}"]
\end{tikzcd}
\]
Actually, $\mathcal{B}_{_{ \widehat{q} K(H_4)^+ \widehat{q} }}$ identifies with the orthogonal to $s_{_{K(H_4)}}(\Delta(a))$ inside the space $r_{_{B(H_4)}}(\Delta(a)) \ K(H_4) \ r_{_{B(H_4)}}(\Delta(a))$.\smallskip

Take a projection $p+r$ in $Sph^+_{K(H_3)}\left( Sph^+_{K(H_3)}(a) \right)$ (clearly $r$ can be any projection in $K(H_3)$ with $ r\leq  q_n$). We know from Corollary \ref{c first consequence K(H)} that $\Delta(p+r)$ is a projection in $Sph^+_{K(H_4)} \left( Sph^+_{K(H_4)}(\Delta(a)) \right)$, and consequently $$\Delta_a (r)=\Delta(p+r) - s_{_{K(H_4)}} (\Delta(a))$$ must be a projection. We have therefore shown that the map $\Delta_a$ above is a surjective isometry mapping projections to projections.\smallskip

We deduce from Lemma \ref{l surjective isometries between balls of positive} that dim$(H)=n,$ and by the same lemma there exists a complex linear (unital) Jordan $^*$-isomorphism $$T_a: q_n K(H_3) q_n\cong B(\ell_2^{n}) \to \widehat{q} K(H_4)^+ \widehat{q} \cong B(\ell_2^{n})$$ satisfying one of the next statements:\begin{enumerate}[$(1)$] \item $\Delta_a (x) = T_a (x),$ for all $x\in \mathcal{B}_{_{q_n K(H_3)^+ q_n }}$;
\item $\Delta_a (x) =\textbf{1}_{\widehat{q}}- T_a (x),$ for all $x\in \mathcal{B}_{_{q_n K(H_3)^+ q_n }}$, where $\textbf{1}_{\widehat{q}} = r_{_{B(H_4)}} (\Delta(a)) - s_{_{K(H_4)}} (\Delta(a))$ is the unit of $\widehat{q} K(H_4)^+ \widehat{q} \cong B(H)$.
\end{enumerate}

We claim that case $(2)$ is impossible. Actually, if case $(2)$ holds, then $$\Delta(p) = s_{_{K(H_4)}} (\Delta(a)) + \Delta_a (0)= s_{_{K(H_4)}} (\Delta(a)) + \left(r_{_{B(H_4)}} (\Delta(a)) - s_{_{K(H_4)}} (\Delta(a))\right) - T_a (0) $$ $$=s_{_{K(H_4)}} (\Delta(a)) + \left(r_{_{B(H_4)}} (\Delta(a)) - s_{_{K(H_4)}} (\Delta(a))\right),$$ where $\left(r_{_{B(H_4)}} (\Delta(a)) - s_{_{K(H_4)}} (\Delta(a))\right)$ and $s_{_{K(H_4)}} (\Delta(a)) $ are orthogonal, and the rank of $\left(r_{_{B(H_4)}} (\Delta(a)) - s_{_{K(H_4)}} (\Delta(a))\right)$ is precisely the dimension of $H$ which is $n$. This shows that $\Delta(p)$ has rank bigger than or equal to $n+1> m_0,$ which is impossible because $m_0$ is the rank of $\Delta (p)$.\smallskip

Since case $(1)$ holds, we have $$\Delta(p+e_1) = s_{_{K(H_4)}} (\Delta(a)) + T_a (e_1)\geq s_{_{K(H_4)}} (\Delta(a)) = \Delta(p),$$ because $T_a (e_1)$ is a non-zero projection and $T_a (e_1)\perp s_{_{K(H_4)}} (\Delta(a)) $. This proves \eqref{eq projection and some minimal orthogonal}. We have also proved that $$s_{_{K(H_4)}} (\Delta(a)) = \Delta (p), \hbox{ and } \Delta (p+ q_n ) = r_{_{B(H_4)}} (\Delta(a)).$$

Now, let $p,q\in \mathcal{P}roj(K(H_3))^*$ with $p\leq q$. In our context we can find mutually orthogonal minimal projections $e_1,\ldots, e_m$ in $K(H_3)$ satisfying $\displaystyle q = p + \sum_{j=1}^m e_j$. Applying \eqref{eq projection and some minimal orthogonal} a finite number of steps we get $$ \Delta (p) \leq \Delta (p+e_1)\leq \ldots \leq \Delta \left(p + \sum_{j=1}^m e_j\right)= \Delta (q).$$

Take now $p,q\in\mathcal{P}roj(K(H_3))^*$ with $p q =0$. Under these hypothesis, Lemma \ref{l orthomorphism preserves orthogonality} assures that $\Delta (p) \Delta (q) =0$.\smallskip

$(b)$ Let us apply the arguments in the proof of $(a)$ to the element $\displaystyle a = p_1 + \sum_{j=2}^n \frac12 p_j$. Let $\displaystyle q_{n-1}= \sum_{j=2}^n p_j$ and $\widehat{q}=\Delta (q_{n-1}) = r_{_{B(H_4)}} (\Delta(a)) - s_{_{K(H_4)}} (\Delta(a))$. We deduce from the above arguments the existence of a surjective isometry $$\Delta_a : \mathcal{B}_{_{q_{n-1} K(H_3)^+ q_{n-1} }}\cong\mathcal{B}_{B(\ell_2^{n-1})^+} \to \mathcal{B}_{_{ \widehat{q} K(H_4)^+ \widehat{q} }} \cong \mathcal{B}_{B(\ell_2^{n-1})^+}$$ making the following diagram commutative
\[\begin{tikzcd} Sph^+_{K(H_3)}\left( Sph^+_{K(H_3)}(a) \right) \arrow{r}{\Delta} \arrow[transform canvas={xshift=0.3ex},-]{d} \arrow[transform canvas={xshift=-0.4ex},-]{d} &  Sph^+_{K(H_4)}\left( Sph^+_{K(H_4)}(\Delta(a)) \right) \arrow[transform canvas={xshift=0.3ex},-]{d} \arrow[transform canvas={xshift=-0.4ex},-]{d} \\
 p_1 + \mathcal{B}_{_{q_{n-1} K(H_3)^+ q_{n-1} }}  \arrow[swap]{d}{\tau_{-p}} & \Delta(p_1) + \mathcal{B}_{_{ \widehat{q} K(H_4)^+ \widehat{q} }} \\
  \mathcal{B}_{_{q_{n-1} K(H_3)^+ q_{n-1} }}\cong \mathcal{B}_{B(\ell_2^{n-1})^+} \arrow{r}{\Delta_a} &  \mathcal{B}_{_{ \widehat{q} K(H_4)^+ \widehat{q} }} \cong \mathcal{B}_{B(H)^+} \arrow[u, "\tau_{\Delta(p_1)}"]
\end{tikzcd}
\]

Since, by $(a),$ $\Delta|_{\mathcal{P}roj(K(H_3))^*}$ is an order automorphism, the reasonings in $(a)$, and Lemma \ref{l surjective isometries between balls of positive} prove the existence of a complex linear (unital) Jordan $^*$-iso-morphism $T_a:B(\ell_2^{n-1})\cong q_{n-1} K(H_3) q_{n-1} \to B(\ell_2^{n-1})\cong \widehat{q} K(H_4) \widehat{q}$ satisfying $$\Delta_a (x) = T_a (x), \hbox{ for all $x\in \mathcal{B}_{B(\ell_2^{n-1})^+}\cong \mathcal{B}_{_{q_{n-1} K(H_3)^+ q_{n-1} }}$.}$$

Pick $j\in \{2,\ldots,n\}$. Since $\Delta|_{\mathcal{P}roj(K(H_3))^*}$ is an order automorphism and preserves orthogonality, the elements $\Delta(p_1),$ $\Delta(p_j),$ and $\Delta(p_1+p_j)$ are non-trivial projections in $K(H_3)$, $\Delta(p_1)$ and $\Delta(p_j)$ are minimal, $\Delta(p_1)\perp \Delta(p_j),$ $\Delta(p_1+p_j)$ is a rank-2 projection, and $\Delta(p_1+p_j) \geq \Delta(p_j)$. We also know that $p_j$ lies in $\mathcal{B}_{_{q_{n-1} K(H_3)^+ q_{n-1} }}$, $T_a (p_j)$ is a minimal projection, $T_a (p_j)\perp \Delta(p_1)$, and $\Delta(p_1+p_j) =\Delta(p_1) +T_a (p_j)$. By applying that $\Delta (p_1) \perp \Delta(p_j)$ we get $$\Delta(p_j) = \Delta(p_1+p_j) \Delta(p_j) = (\Delta(p_1) +T_a (p_j)) \Delta (p_j) = T_a (p_j) \Delta (p_j).$$ The minimality of $T_a (p_j)$ and $\Delta (p_j)$ assures that $T_a (p_j) =  \Delta (p_j).$\smallskip

Finally, given $1=\lambda_1\geq \lambda_2,\ldots, \lambda_n\geq 0$ the element $\displaystyle \sum_{j=1}^{n} \lambda_j p_j = p_1 + \sum_{j=2}^{n} \lambda_j p_j $ lies in the set $Sph^+_{K(H_3)}\left( Sph^+_{K(H_3)}(a) \right)$ and hence $$\Delta\left(\sum_{j=1}^{n} \lambda_j p_j \right) = \Delta(p_1) + \Delta_a\left(\sum_{j=2}^{n} \lambda_j p_j \right) = \Delta(p_1) + T_a\left(\sum_{j=2}^{n} \lambda_j p_j \right) $$ $$= \Delta(p_1) + \sum_{j=2}^{n} \lambda_j T_a\left( p_j \right) =  \Delta(p_1) + \sum_{j=2}^{n} \lambda_j \Delta \left( p_j \right),$$ which finishes the proof of $(b)$.
\end{proof}

Our next corollary is a first consequence of the previous proposition.

\begin{corollary}\label{c linear maps between spheres of Kell2 isometric on the sphere} Let $H_3$ and $H_4$ be separable complex Hilbert spaces. Let us assume that $H_3$ is infinite-dimensional. If $T: K(H_3)\to K(H_4)$ is a bounded \linebreak {\rm(}complex{\rm)} linear mapping such that $T(S(K(H_3)^+)) = S(K(H_4)^+)$ and $T|_{S(K(H_3)^+)} : S(K(H_3)^+) \to  S(K(H_4)^+)$ is a surjective isometry, then $T$ is a $^*$-isomorphism or a $^*$-anti-isomorphism.
\end{corollary}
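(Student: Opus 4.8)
The plan is to reproduce, in the non-unital compact-operator setting, the scheme used for the ``Consequently\dots'' part of Proposition \ref{p first consequence bis}: show that $T$ preserves projections and their orthogonality, upgrade this to the Jordan $^*$-homomorphism identities on a norm-dense set, deduce bijectivity, and finally pass to a factor argument. The two places where the argument must diverge from the $B(H)$ case are that von Neumann-algebra spectral approximation is replaced by the spectral resolution of compact self-adjoint operators, and that Kadison's factor dichotomy cannot be applied to $K(H)$ directly but only after passing to the second duals. Since $H_3$ is infinite-dimensional, Proposition \ref{p second consequence K(H)}$(a)$ applies to the surjective isometry $\Delta:=T|_{S(K(H_3)^+)}$, giving that $T$ carries $\mathcal{P}roj(K(H_3))^*$ onto $\mathcal{P}roj(K(H_4))^*$, that the restriction is an order automorphism, and that $T$ preserves orthogonality among non-zero projections.

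First I would establish the relevant algebraic identities on a dense set. For a finite real linear combination $a=\sum_{j=1}^{m}\lambda_j p_j$ of mutually orthogonal non-zero projections, the linearity of $T$ together with the fact that $\{T(p_j)\}$ are again mutually orthogonal projections shows that $T(a)$ is self-adjoint and
\[
T(a)^2=\sum_{j=1}^{m}\lambda_j^2\,T(p_j)=T\Big(\sum_{j=1}^{m}\lambda_j^2 p_j\Big)=T(a^2).
\]
By the spectral theorem for compact self-adjoint operators, every hermitian element of $K(H_3)$ is a norm limit of such combinations, so the boundedness (hence norm-continuity) of $T$ propagates these identities to $T(b)^*=T(b)$ and $T(b^2)=T(b)^2$ for every hermitian $b\in K(H_3)$. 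As recalled in the proof of Proposition \ref{p first consequence bis}, this is exactly the statement that $T$ is a Jordan $^*$-homomorphism.

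Next I would verify bijectivity. Surjectivity is immediate: from $T(S(K(H_3)^+))=S(K(H_4)^+)$ and scaling one gets $K(H_4)^+\subseteq T(K(H_3))$, and since $K(H_4)$ is the complex linear span of $K(H_4)^+$ the map $T$ is onto. For injectivity, given a hermitian $a$ with $T(a)=0$ I would write $a=a^+-a^-$ with $a^+,a^-\geq 0$ and $a^+\circ a^-=0$; then $T(a^+)=T(a^-)=:y\geq 0$ and $y^2=T(a^+\circ a^-)=T(0)=0$, so $y=0$, while the injectivity of $T$ on positive elements (it is isometric on $S(K(H_3)^+)$) forces $a^+=a^-=0$. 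Splitting an arbitrary element into real and imaginary parts then yields injectivity, so $T$ is a Jordan $^*$-isomorphism of $K(H_3)$ onto $K(H_4)$.

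Finally, to pass from ``Jordan $^*$-isomorphism'' to ``$^*$-isomorphism or $^*$-anti-isomorphism'' I would take second duals. Identifying $K(H_i)^{**}=B(H_i)$, the bidual $T^{**}:B(H_3)\to B(H_4)$ is a weak$^*$-continuous, unital Jordan $^*$-isomorphism between the factors $B(H_3)$ and $B(H_4)$, to which \cite[Corollary 11]{Kad51} applies precisely as in Proposition \ref{p first consequence bis}; hence $T^{**}$, and therefore its restriction $T$, is a $^*$-isomorphism or a $^*$-anti-isomorphism. I expect this last step to be the main obstacle: because $K(H_3)$ and $K(H_4)$ are non-unital, Kadison's factor dichotomy is not available on the nose, and one must justify that the bidual extension is a genuinely unital, normal Jordan $^*$-isomorphism onto $B(H_4)$ --- in particular that $T^{**}(\textbf{1})=\textbf{1}$, which I would obtain by applying $T^{**}$ to the weak$^*$-limit of an approximate unit of $K(H_3)$ --- before the factor argument can be invoked.
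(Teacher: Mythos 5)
Your proposal is correct and follows essentially the same route as the paper's proof: apply Proposition \ref{p second consequence K(H)} to $\Delta:=T|_{S(K(H_3)^+)}$ to get preservation of projections, order and orthogonality, verify the identities $T(a)^*=T(a)$ and $T(a^2)=T(a)^2$ on finite real linear combinations of mutually orthogonal minimal projections, extend by norm density and continuity to conclude that $T$ is a Jordan $^*$-isomorphism, and finish with Kadison's factor dichotomy. The only difference is that you carefully justify two points the paper leaves implicit --- the injectivity of $T$ and the passage to the unital bidual map $T^{**}:B(H_3)\to B(H_4)$ before invoking \cite[Corollary 11]{Kad51} --- which is a welcome refinement rather than a different argument.
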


\begin{proof} Let $T: K(H_3)\to K(H_4)$ be a a bounded linear map satisfying the hypothesis of the corollary. We observe that $T$ must be bijective by hypothesis.\smallskip

We observe that $T(\mathcal{P}roj(K(H_3)))= \mathcal{P}roj(K(H_4)) $ (see Corollary \ref{c first consequence K(H)}), and by Proposition \ref{p second consequence K(H)}, $T$ also preserves order among projections. In particular $T(p) T(q) =0$ for every $p,q \in \mathcal{P}roj(K(H_3))^*$ with $p q =0$ (just observe that the sum of two projections is a projection if and only if they are orthogonal), and thus $T(a^2) = T(a)^2$ and $T(a)^* = T(a),$ whenever $a$ is a finite real linear combination of mutually orthogonal minimal projections in $K(H_3)$. The continuity of $T$ and the norm density in $K(H_3)_{sa}$ of elements which are finite real linear combination of mutually orthogonal minimal projections in $K(H_3)$, imply that $T$ is a Jordan $^*$-isomorphism. The rest is clear from \cite[Corolary 11]{Kad51} because $B(H_3)$ is a factor.
\end{proof}

In the main theorem of this section we extend surjective isometries of the form $\Delta : S(K(H_3)^+)\to S(K(H_4)^+)$. In the proof we shall employ a technique based on the study on the linearity of ``physical states'' on $K(H)$ developed by J.F. Aarnes in \cite{Aarnes70}. We recall that a \emph{physical state} or a \emph{quasi-state} on a C$^*$-algebra $A$ is a function $\rho: A_{sa}\to \mathbb{R}$ whose restriction to each singly generated subalgebra of $A_{sa}$ is a positive linear functional and $$\sup \{ \rho(a) : a \in \mathcal{B}_{_{A^+}}\}= 1.$$ As remarked by Aarnes in \cite[page 603]{Aarnes70}, ``It is far from evident that a physical state on $A$ must be (real) linear on $A_{sa}$'', however, under favorable hypothesis, linearity is automatic and not an extra assumption.

\begin{theorem}\label{t Nagy for K(ell2)} Let $H_3$ and $H_4$ be separable complex Hilbert spaces. Let us assume that $H_3$ is infinite-dimensional. Let $\Delta : S(K(H_3)^+)\to S(K(H_4)^+)$ be a surjective isometry. Then there exists a surjective complex linear isometry $T: K(H_3)\to K(H_4)$ satisfying $T(x) = \Delta(x)$ for all $x\in S(K(H_3)^+)$. We can further conclude that $T$ is a $^*$-isomorphism or a $^*$-anti-isomorphism.
\end{theorem}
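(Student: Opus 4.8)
The plan is to reduce the theorem to Corollary \ref{c linear maps between spheres of Kell2 isometric on the sphere}: once I produce a bounded complex linear map $T: K(H_3)\to K(H_4)$ with $T|_{S(K(H_3)^+)} = \Delta$, the surjective isometry $\Delta$ automatically equips $T$ with the hypotheses of that corollary, whence $T$ is a $^*$-isomorphism or a $^*$-anti-isomorphism and, in particular, a surjective complex linear isometry. Thus the whole difficulty lies in \emph{linearising} $\Delta$. Here the Bunce--Wright--Mackey--Gleason theorem used in Theorem \ref{t positive Tigley for B(H)} is unavailable (it applies to von Neumann algebras, not to $K(H)$), and its role will be played by J.F. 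Aarnes' theorem on the automatic linearity of quasi-states on $K(H)$ \cite{Aarnes70}. As a preliminary step I would record, from Proposition \ref{p second consequence K(H)}$(a)$, that $p\mapsto \Delta(p)$ is an order isomorphism of $\mathcal{P}roj(K(H_3))^*$ onto $\mathcal{P}roj(K(H_4))^*$ preserving orthogonality; combined with Proposition \ref{p second consequence K(H)}$(b)$ this yields the diagonal identity $\Delta(\sum_n \lambda_n p_n) = \sum_n \lambda_n \Delta(p_n)$ for every $a=\sum_n \lambda_n p_n \in S(K(H_3)^+)$ written through its spectral resolution (finite-rank eigenprojections are split into minimal projections to apply $(b)$, recombined using that $\Delta$ preserves suprema of orthogonal projections, and the infinite-rank case follows by norm-continuity of $\Delta$, since $\lambda_n\to 0$ and the $\Delta(p_n)$ are mutually orthogonal).

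Next, for each positive functional $\omega\in (K(H_4)^*)^+ = C_1(H_4)^+$ I would define $\rho_\omega : K(H_3)_{sa}\to \mathbb{R}$ by $\rho_\omega(a) = \sum_n \lambda_n\, \omega(\Delta(p_n))$, where $a = \sum_n \lambda_n p_n$ is the spectral decomposition of the self-adjoint compact operator $a$ (note that $\Delta$ is evaluated only on the projections $p_n$, which lie on the sphere). On each singly generated subalgebra $C^*(a)$ every element has the form $\sum_n g(\lambda_n) p_n$, so $\rho_\omega$ is linear there; that is, $\rho_\omega$ is \emph{quasi-linear} and positive. Moreover, since the $\Delta(p_n)$ are mutually orthogonal projections one has $\sum_n \Delta(p_n)\le \textbf{1}$ in $B(H_4)$, whence $\sum_n \omega(\Delta(p_n))\le \|\omega\|$ and $|\rho_\omega(a)|\le \|a\|\,\|\omega\|$, so $\rho_\omega$ is bounded. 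After normalisation $\rho_\omega$ is a quasi-state, and by construction $\rho_\omega$ depends linearly and positively on $\omega$, while the diagonal identity of the first paragraph gives $\rho_\omega(a) = \omega(\Delta(a))$ for every $a\in S(K(H_3)^+)$.

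Then Aarnes' theorem \cite{Aarnes70} forces each $\rho_\omega$ to be (real) linear on $K(H_3)_{sa}$, so $\rho_\omega\in (K(H_3)^*)^+$. To assemble $T$, fix $a\in K(H_3)_{sa}$: the map $\omega\mapsto \rho_\omega(a)$ is linear and satisfies $|\rho_\omega(a)|\le \|a\|\,\|\omega\|$, hence defines an element $T(a)\in C_1(H_4)^* = B(H_4)$ through $\omega(T(a)) = \rho_\omega(a)$. Linearity of each $\rho_\omega$ makes $a\mapsto T(a)$ real linear with $\|T(a)\|\le\|a\|$, and $T(a)$ is self-adjoint because $\rho_\omega(a)\in\mathbb{R}$; complexifying yields a bounded complex linear $T: K(H_3)\to B(H_4)$. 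On the sphere $\omega(T(a)) = \omega(\Delta(a))$ for all positive $\omega$ gives $T(a)=\Delta(a)\in K(H_4)$, and since the complex linear span of $S(K(H_3)^+)$ is all of $K(H_3)$ we conclude $T(K(H_3))=\mathrm{span}_{\mathbb{C}}\,\Delta(S(K(H_3)^+))\subseteq K(H_4)$. Thus $T:K(H_3)\to K(H_4)$ is bounded complex linear with $T|_{S(K(H_3)^+)}=\Delta$, and Corollary \ref{c linear maps between spheres of Kell2 isometric on the sphere} completes the proof.

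I expect the main obstacle to be the rigorous verification that $\rho_\omega$ meets the precise hypotheses of Aarnes' theorem: quasi-linearity on every singly generated subalgebra (to be extracted from the spectral calculus together with the diagonal identity of Proposition \ref{p second consequence K(H)}$(b)$, including the passage from minimal to arbitrary spectral projections and the infinite-rank limit) and the correct boundedness and normalisation (to be obtained from orthogonality preservation). A secondary subtlety is the concluding point that the assembled map $T$ genuinely takes values in $K(H_4)$ rather than merely in $B(H_4)=K(H_4)^{**}$, which I would settle through the span identity above rather than by a density-and-continuity argument.
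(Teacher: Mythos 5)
Your proposal is correct and follows essentially the same route as the paper: both rest on the diagonal identity $\Delta(\sum_n\lambda_n p_n)=\sum_n\lambda_n\Delta(p_n)$ from Proposition \ref{p second consequence K(H)}, on Aarnes' automatic linearity of quasi-states to linearise, and on Corollary \ref{c linear maps between spheres of Kell2 isometric on the sphere} to finish. The only (cosmetic) difference is the order of construction: the paper first defines the operator-valued map $T(b)=\|b^+\|\Delta(b^+/\|b^+\|)-\|b^-\|\Delta(b^-/\|b^-\|)$ and then composes with positive functionals to obtain the quasi-states, whereas you build the scalar quasi-states $\rho_\omega$ first and recover $T$ by duality in $C_1(H_4)^*=B(H_4)$ --- which works, and your landing-in-$K(H_4)$ concern is in fact settled immediately by the norm-convergent formula $T(a)=\sum_n\lambda_n\Delta(p_n)$, making the span argument unnecessary.
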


\begin{proof} Let $a$ be an element in $S(K(H_3)^+)$, and let us consider the spectral resolution of $a$ in the form $\displaystyle a=\sum_{n=1}^{\infty} \lambda_n p_n,$ where $(\lambda_n)_n$ is a decreasing sequence in $\mathbb{R}_0^+$ converging to zero, $\lambda_1=1$, and $\{ p_n : n\in \mathbb{N}\}$ is a family of mutually orthogonal minimal projections in $K(H_3)$. Applying Proposition \ref{p second consequence K(H)}$(a)$ we deduce that $\{ \Delta(p_n) : n\in \mathbb{N}\}$ is a family of mutually orthogonal minimal projections in $K(H_4)$. Having in mind that orthogonal elements are geometrically $M$-orthogonal, it can be easily deduced that the series $\displaystyle \sum_{n=1}^{\infty} \lambda_n \Delta(p_n)$ is norm convergent. Furthermore, since by Proposition \ref{p second consequence K(H)}$(b)$ and the hypothesis we have $$\left\| \Delta(a) - \sum_{n=1}^{m} \lambda_n \Delta(p_n) \right\| =\left\| \Delta(a) - \Delta\left( \sum_{n=1}^{m} \lambda_n p_n \right) \right\| = \left\| a -  \sum_{n=1}^{m} \lambda_n p_n \right\| = \lambda_{m+1}, $$ it follows that \begin{equation}\label{eq Delta preserves spectral resolution for positive} \Delta(a) = \Delta\left( \sum_{n=1}^{\infty} \lambda_n p_n \right) = \sum_{n=1}^{\infty} \lambda_n \Delta(p_n).
\end{equation} Combining \eqref{eq Delta preserves spectral resolution for positive} and Proposition \ref{p second consequence K(H)}$(a)$ we can see that \begin{equation}\label{eq Delta preserves orthogonality} a\perp b \hbox{ in } S(K(\ell_2)^+) \Rightarrow \Delta (a) \perp \Delta (b).
\end{equation}

Every element $b$ in $K(H_3)_{sa}$ writes uniquely in the form $b = b^+ - b^-$, where $b^+,b^-$ are orthogonal positive elements in $K(H_3)$. Having this property in mind, we define a mapping $T: K(H_3)_{sa} \to K(H_4)_{sa}$ given by $$T(b) := \|b^+\| \Delta \left(\frac{b^+}{\|b^+\|}\right) - \|b^-\| \Delta \left(\frac{b^-}{\|b^-\|}\right), \ \ \hbox{if } \|b^+\| \ \|b^-\| \neq 0,$$
$$T(b) := \|b^+\| \Delta \left(\frac{b^+}{\|b^+\|}\right), \ \ \hbox{if } \|b^+\| \neq 0, b^-=0,$$
$$T(b) := \|b^-\| \Delta \left(\frac{b^-}{\|b^-\|}\right), \ \ \hbox{if } \|b^-\| \neq 0, b^+=0, \hbox{ and } T(0) =0.$$

It follows from definition that \begin{equation}\label{eq boundedness of T}\|T (b)\| \leq \|b^+\| + \|b^-\|\leq 2 \|b\|.
\end{equation}

For each positive functional $\phi \in \mathcal{B}_{_{(K(H_4)^*)^+}}$ we set $T_{\phi} := \phi \circ T : K(H_3)_{sa}\to \mathbb{R}$, $T_{\phi} (x) =\phi (T(x))$. We claim that $T_{\phi}$ is a positive multiple of a physical state. Namely, it follows from \eqref{eq boundedness of T} that $\sup \{ |T_{\phi} (a)| : a\in \mathcal{B}_{_{A^+}} \}\leq 2$. Therefore, we only have to show that the restriction of $T_{\phi}$ to each singly generated subalgebra of $K(H_3)_{sa}$ is linear.\smallskip

Let $b$ be an element in $K(H_3)_{sa}$. We shall distinguish two cases.\smallskip

Case $(a)$: $b$ has finite spectrum. In this case, $b$ is a finite rank operator and $\displaystyle b=\sum_{n=1}^{m} \mu_n p_n,$ where $\mu_1,\ldots, \mu_m\in \mathbb{R}\backslash\{0\}$, and $\{ p_n : n=1,\ldots,m\}$ is a family of mutually orthogonal minimal projections in $K(H_3)$. Elements $x,y$ in the subalgebra of $K(H_3)_{sa}$ generated by $b$ can be written in the form $\displaystyle x=\sum_{n=1}^{m} x(n) p_n,$ and $\displaystyle y=\sum_{n=1}^{m} y(n) p_n,$ where $x(n), y(n)\in \mathbb{R}.$ Let us set $\Theta_x^+=\{n\in \{1,\ldots,m\} : x(n) \geq 0\}$ and $\Theta_x^-=\{n\in \{1,\ldots,m\} : x(n) < 0\}.$
Suppose that $x^+,x^-\neq 0$. By applying the definition of $T$ we obtain $$ T(x)  = \|x^+\| \Delta \left(\frac{b^+}{\|b^+\|}\right) - \|x^-\| \Delta \left(\frac{b^-}{\|b^-\|}\right)$$ $$ = \|x^+\| \Delta \left(\sum_{n\in \Theta_x^+} \frac{x(n)}{\|x^+\|} p_n \right) - \|x^-\| \Delta \left(\sum_{n\in \Theta_x^-} \frac{-x(n)}{\|x^-\|} p_n \right) $$ $$= \|x^+\| \sum_{n\in \Theta_x^+} \frac{x(n)}{\|x^+\|} \Delta \left(p_n \right) - \|x^-\| \sum_{n\in \Theta_x^-} \frac{-x(n)}{\|x^-\|} \Delta \left(p_n \right) = \sum_{n=1}^{m} x(n) \Delta(p_n),$$ where the penultimate equality follows from Proposition \ref{p second consequence K(H)}$(b)$. In the remaining cases (i.e. $\|x^+\| \|x^-\| =0$) we also have $\displaystyle T(x) = \sum_{n=1}^{m} x(n) \Delta(p_n)$. Since similar conclusions hold for $y$, $x+y$ and $\alpha x$  with $\alpha\in \mathbb{R}$, we deduce that $$ T(x+y ) = \sum_{n=1}^{m} (x(n)+y(n)) \Delta(p_n) = \sum_{n=1}^{m} x(n) \Delta(p_n) + \sum_{n=1}^{m} y(n) \Delta(p_n) = T(x) + T(y),$$ and $$T(\alpha x) = \sum_{n=1}^{m} (\alpha x)(n)  \Delta(p_n) = \alpha \sum_{n=1}^{m} x(n) \Delta(p_n) = \alpha T(x),$$ which shows that $T$ is linear on the subalgebra generated by $b$.\smallskip

Case $(b)$: $b$ has infinite spectrum. In this case, $\displaystyle b=\sum_{n=1}^{\infty} \lambda_n p_n,$ where $(\lambda_n)_n$ is a decreasing sequence in $\mathbb{R}\backslash\{0\}$ converging to zero and $\{ p_n : n\in \mathbb{N}\}$ is a family of mutually orthogonal minimal projections in $K(H_3)$. Elements $x$ and $y$ in the subalgebra of $K(H_3)_{sa}$ generated by $b$ can be written in the form $\displaystyle x=\sum_{n=1}^{\infty} x(n) p_n,$ and $\displaystyle y=\sum_{n=1}^{\infty} y(n) p_n,$ where $(x(n))$ and $(y(n))$ are null sequences in $\mathbb{R}.$ Keeping in mind the notation employed in the previous paragraph we deduce that if $x^+,x^-\neq 0$ we have $$ T(x)  = \|x^+\| \Delta \left(\frac{b^+}{\|b^+\|}\right) - \|x^-\| \Delta \left(\frac{b^-}{\|b^-\|}\right)$$ $$ = \|x^+\| \Delta \left(\sum_{n\in \Theta_x^+} \frac{x(n)}{\|x^+\|} p_n \right) - \|x^-\| \Delta \left(\sum_{n\in \Theta_x^-} \frac{-x(n)}{\|x^-\|} p_n \right) = \hbox{(by \eqref{eq Delta preserves spectral resolution for positive})}$$ $$= \|x^+\| \sum_{n\in \Theta_x^+} \frac{x(n)}{\|x^+\|} \Delta \left(p_n \right) - \|x^-\| \sum_{n\in \Theta_x^-} \frac{-x(n)}{\|x^-\|} \Delta \left(p_n \right) = \sum_{n=1}^{\infty} x(n) \Delta(p_n).$$ In the remaining cases the identity \begin{equation}\label{eq T preserves spectral resolutions too} T(x) = \sum_{n=1}^{\infty} x(n) \Delta(p_n)
 \end{equation} also holds. It is therefore clear that $T$ is linear on the subalgebra generated by $b$.\smallskip

We have therefore proved that, $T_{\phi}: K(H_3)_{sa}\to \mathbb{R}$ is a positive multiple of a physical state for every $\phi \in \mathcal{B}_{_{(K(H_4)^*)^+}}$. Applying \cite[Corollary 2]{Aarnes70} to the complex linear extension of $T_{\phi}$ from $K(H_3)$ to $\mathbb{C}$ it follows that $$\phi (T(x+y)) = T_{\phi} (x+y) = T_{\phi} (x) + T_{\phi} (y) = \phi (T(x) + T(y)),$$ and $$\phi (T(\alpha x) ) = T_{\phi} (\alpha x) = \alpha T_{\phi} (x) =  \phi (\alpha T(x)),$$ for all $x,y\in  K(H_3)_{sa},$ $\alpha\in \mathbb{R}$, and $\phi \in \mathcal{B}_{_{(K(H_4)^*)^+}}$. Since functionals in $\mathcal{B}_{_{(K(H_4)^*)^+}}$ separate the points in $K(H_4)_{sa}$, we deduce that $T: K(H_3)_{sa}\to K(H_4)_{sa}$ is real linear. We denote by the same symbol $T$ the complex linear extension of $T$ from $K(H_3)$ to $K(H_4)$. We have obtained a complex linear map $T :  K(H_3)\to K(H_4)$ satisfying $T(a) = \Delta(a)$ for all $a\in S( K(H_3)^+)$ (compare \eqref{eq Delta preserves spectral resolution for positive} and \eqref{eq T preserves spectral resolutions too}). Corollary \ref{c linear maps between spheres of Kell2 isometric on the sphere} assures that $T :  K(H_3)\to K(H_4)$ is an isometric $^*$-isomorphism or $^*$-anti-isomorphism. \end{proof}

\medskip\medskip

\textbf{Acknowledgements} Author partially supported by the Spanish Ministry of Economy and Competitiveness (MINECO) and European Regional Development Fund project no. MTM2014-58984-P and Junta de Andaluc\'{\i}a grant FQM375.

\bibliographystyle{amsplain}

\begin{thebibliography}{99}

\bibitem{Aarnes70} J.F. Aarnes, Quasi-states on C$^*$-algebras, \emph{Trans. Amer. Math. Soc.} \textbf{149},
601-625 (1970).

\bibitem{BuWri92} L.J. Bunce, J.D.M. Wright, The Mackey-Gleason problem, \emph{Bull. Amer. Math. Soc.} \textbf{26}, 288-293 (1992).

\bibitem{BuWri94} L.J. Bunce, J.D.M. Wright, The Mackey-Gleason problem for vector measures on projections in von Neumann algebras, \emph{J. London Math. Soc.} \textbf{49}, 133-149 (1994).

\bibitem{ChenDong2011} L. Cheng, Y. Dong, On a generalized Mazur-Ulam question: extension of isometries between unit spheres of Banach spaces, \emph{J. Math. Anal. Appl.} \textbf{377}, 464-470 (2011).

\bibitem{Ding2002} G.G. Ding, The 1-Lipschitz mapping between the unit spheres of two Hilbert spaces can be extended to a real linear isometry of the whole space, \emph{Sci. China Ser. A} \textbf{45}, no. 4, 479-483 (2002).

\bibitem{Di:p} G.G. Ding, The isometric extension problem in the spheres of $l^p (\Gamma)$ $(p>1)$ type spaces, \emph{Sci.\ China Ser.\ A} \textbf{46}, 333-338 (2003).

\bibitem{Di:8}
G.G. Ding, The representation theorem of onto isometric mappings between two unit spheres of $l^\infty$-type spaces and the application on isometric extension problem, \emph{Sci.\ China Ser.\ A} \textbf{47}, 722-729 (2004).

\bibitem{Di:1}
G.G. Ding, The representation theorem of onto isometric mappings between two unit spheres of $l^1 (\Gamma)$ type spaces and the application to the isometric extension problem, \emph{Acta.\ Math.\ Sin.\ (Engl.\ Ser.)} \textbf{20}, 1089-1094 (2004).

\bibitem{FerGarPeVill17} F.J. Fern\'andez-Polo, J.J. Garc{\'e}s, A.M. Peralta, I. Villanueva, Tingley's problem for spaces of trace class operators,  \emph{Linear Algebra Appl.} \textbf{529}, 294-323 (2017).

\bibitem{FerPe17} F.J. Fern\'andez-Polo, A.M. Peralta, Low rank compact operators and Tingley's problem, preprint 2016. arXiv:1611.10218v1

\bibitem{FerPe17b} F.J. Fern\'andez-Polo, A.M. Peralta, On the extension of isometries between the unit spheres of a C$^*$-algebra and $B(H)$,  \emph{Trans. Amer. Math. Soc.} \textbf{5}, 63-80 (2018).

\bibitem{FerPe17c} F.J. Fern\'andez-Polo, A.M. Peralta, Tingley's problem through the facial structure of an atomic JBW$^*$-triple, \emph{J. Math. Anal. Appl.} \textbf{455}, 750-760 (2017).

\bibitem{FerPe18} F.J. Fern\'andez-Polo, A.M. Peralta, Partial Isometries: a survey, \emph{Adv. Oper. Theory} \textbf{3}, no. 1, 87-128 (2018).

\bibitem{FerPe17d} F.J. Fern\'andez-Polo, A.M. Peralta, On the extension of isometries between the unit spheres of von Neumann algebras, preprint 2017. arXiv:1709.08529v1


\bibitem{IsRod1995} J.M. Isidro, A. Rodríguez-Palacios, Isometries of JB-algebras, \emph{Manuscripta Math.} \textbf{86}, no. 3, 337-348 (1995).

\bibitem{KadMar2012} V. Kadets, M. Mart{\'i}n, Extension of isometries between unit spheres of finite-dimensional polyhedral Banach spaces, \emph{J. Math. Anal. Appl.} \textbf{396}, 441-447 (2012).

\bibitem{Kad51} R.V. Kadison, Isometries of operator algebras, \emph{Ann. of Math.} (2) \textbf{54}, 325-338 (1951).

\bibitem{Mank1972} P. Mankiewicz, On extension of isometries in normed linear spaces, \emph{Bull. Acad. Pol. Sci., S\'{e}r. Sci. Math. Astron. Phys.} \textbf{20}, 367-371 (1972).

\bibitem{MolNag2012} L. Moln{\'a}r, G. Nagy, Isometries and relative entropy preserving maps on density operators, \emph{Linear Multilinear Algebra} \textbf{60}, 93-108 (2012).

\bibitem{MolTim2003} L. Moln{\'a}r, W. Timmermann, Isometries of quantum states, \emph{J. Phys. A: Math. Gen.} \textbf{36}, 267-273 (2003).

\bibitem{Mori2017} M. Mori, Tingley's problem through the facial structure of operator algebras, preprint 2017. arXiv:1712.09192v1

\bibitem{Nagy2013} G. Nagy, Isometries on positive operators of unit norm, \emph{Publ. Math. Debrecen} \textbf{82}, 183-192 (2013).

\bibitem{Nagy2017} G. Nagy, Isometries of spaces of normalized positive operators under the operator norm, \emph{Publ. Math. Debrecen} \textbf{92}, 243-254 (2018).

\bibitem{Patt69} A.L.T. Paterson, Isometries between B$^*$-algebras, \emph{Proc. Amer. Math. Soc.} \textbf{22}, 570-572 (1969).

\bibitem{Ped} G.K. Pedersen, \emph{C$^*$-algebras and their automorphism groups},
London Mathematical Society Monographs Vol. 14, Academic Press, London, 1979.

\bibitem{Pe2018} A.M. Peralta, A survey on Tingley's problem for operator algebras, to appear in \emph{Acta Sci. Math. Szeged}. arXiv:1801.02473v1

\bibitem{Pe2018b}  A.M. Peralta, Characterizing projections among positive operators in the unit sphere, preprint 2018. arXiv:1804.04507

\bibitem{PeTan16} A.M. Peralta, R. Tanaka, A solution to Tingley's problem for isometries between the unit spheres of compact C$^*$-algebras and JB$^*$-triples, to appear in \emph{Sci. China Math.} arXiv:1608.06327v1.

\bibitem{S} S. Sakai, \emph{C$^*$-algebras and $W^*$-algebras}, Springer, Berlin, 1971.

\bibitem{Tak} M. Takesaki,
\newblock {\em Theory of operator algebras I},
\newblock Springer, New York, 2003.


\bibitem{Ta:8} D. Tan, Extension of isometries on unit sphere of $L^\infty$, \emph{Taiwanese J. Math.} \textbf{15}, 819-827 (2011).

\bibitem{Ta:1} D. Tan, On extension of isometries on the unit spheres of $L^p$-spaces for $0<p \leq 1$, \emph{Nonlinear Anal.} \textbf{74}, 6981-6987 (2011).

\bibitem{Ta:p} D. Tan, Extension of isometries on the unit sphere of $L^p$-spaces, \emph{Acta.\ Math.\ Sin.\ (Engl.\ Ser.)} \textbf{28}, 1197-1208 (2012).

\bibitem{Tan2014} R. Tanaka, A further property of spherical isometries, \emph{Bull. Aust. Math. Soc.} \textbf{90}, 304-310 (2014).

\bibitem{Tan2016} R. Tanaka, The solution of Tingley's problem for the operator norm unit sphere of complex $n \times n$ matrices, \emph{Linear Algebra Appl.} \textbf{494}, 274-285 (2016).

\bibitem{Tan2017} R. Tanaka, Spherical isometries of finite dimensional $C^*$-algebras, \emph{J. Math. Anal. Appl.} \textbf{445}, no. 1, 337-341 (2017).

\bibitem{Tan2017b} R. Tanaka, Tingley's problem on finite von Neumann algebras, \emph{J. Math. Anal. Appl.} \textbf{451}, 319-326 (2017).


\bibitem{Wang} R.S. Wang, Isometries between the unit spheres of $C_0(\Omega)$ type spaces, \emph{Acta Math. Sci.} (English Ed.) \textbf{14}, no. 1, 82-89 (1994).


\bibitem{WriYoung1978} J.D.M. Wright, M. Youngson, On isometries of Jordan algebras, \emph{J. London Math. Soc.} \textbf{17}, 339-344 (1978).

\end{thebibliography}

\end{document}